\begin{document} 
\newtheorem{prop}{Proposition}[section]
\newtheorem{Def}{Definition}[section] \newtheorem{theorem}{Theorem}[section]
\newtheorem{lemma}{Lemma}[section] \newtheorem{Cor}{Corollary}[section]

\title[Chern-Simons-Higgs in temporal gauge]{\bf Global well-posedness in energy space for the Chern-Simons-Higgs system in temporal gauge}
\author[Hartmut Pecher]{
{\bf Hartmut Pecher}\\
Fachbereich Mathematik und Naturwissenschaften\\
Bergische Universit\"at Wuppertal\\
Gau{\ss}str.  20\\
42119 Wuppertal\\
Germany\\
e-mail {\tt pecher@math.uni-wuppertal.de}}
\date{}

\begin{abstract}
The Cauchy problem for the Chern-Simons-Higgs system in the (2+1)-dimensional Minkowski space in temporal gauge is globally well-posed in energy space improving a result of Huh. The proof uses the bilinear space-time estimates in wave-Sobolev spaces by d'Ancona, Foschi and Selberg, an $L^6_x L^2_t$-estimate for solutions of the wave equation, and also takes advantage of a null condition.
\end{abstract}
\maketitle
\renewcommand{\thefootnote}{\fnsymbol{footnote}}
\footnotetext{\hspace{-1.5em}{\it 2000 Mathematics Subject Classification:} 
35Q40, 35L70 \\
{\it Key words and phrases:} Chern-Simons-Higgs,  
local well-posedness, temporal gauge}
\normalsize 
\setcounter{section}{0}
\section{Introduction and main results}
\noindent Consider the Chern-Simons-Higgs system in the Minkowski space 
${\mathbb R}^{1+2} = {\mathbb R}_t \times {\mathbb R}_x^2$ with metric $g_{\mu \nu} = diag(1,-1,-1)$ :
\begin{align}
\label{*1}
 F_{\mu \nu} & =  2 \epsilon^{\mu \nu \rho} Im(\overline{\phi} D^{\rho} \phi) \\
\label{*2}
D_{\mu} D^{\mu} \phi & = - \phi V'(|\phi|^2) \, ,
\end{align}
with initial data
\begin{equation}
\label{*3}
A_{\nu}(0) = a_{\nu} \, , \, \phi(0) = \phi_0 \, , \, (\partial_t \phi)(0) = \phi_1 \, , 
\end{equation}
where we use the convention that repeated upper and lower indices are summed, Greek indices run over 0,1,2 and Latin indices over 1,2. Here 
\begin{align*}
D^{\mu}  & := \partial_{\mu} - iA_{\mu} \\
 F_{\mu \nu} & := \partial_{\mu} A_{\nu} - \partial_{\nu} A_{\mu} \\
\end{align*}
$F_{\mu \nu} : {\mathbb R}^{1+2} \to {\mathbb R}$ denotes the curvature, $\phi : {\mathbb R}^{1+2} \to {\mathbb C}$ is a scalar field and $A_{\nu} : {\mathbb R}^{1+2} \to {\mathbb R}$ are the gauge potentials. We use the notation $\partial_{\mu} = \frac{\partial}{\partial x_{\mu}}$, where we write $(x^0,x^1,...,x^n) = (t,x^1,...,x^n)$ and also $\partial_0 = \partial_t$ and $\nabla = (\partial_1,\partial_2)$. $\epsilon^{\mu \nu \rho}$ is the totally skew-symmetric tensor with $\epsilon^{012} = 1$, and the Higgs potential $V$ is assumed to fulfill
$V \in C^{\infty}({\mathbb R}^+,{\mathbb R}) $ , $V(0)=0$ and all derivatives of $V$ have polynomial growth.

The energy $E(t)$ of the system is conserved, where
$$ E(t):= \int_{{\mathbb R}^2} ( \sum_{\mu =0}^2 |D_{\mu} \phi(t)|^2 + V(|\phi(t)|^2)) \, dx \, . $$

This model was proposed by Hong, Kim and Pac \cite{HKP} and Jackiw and Weinberg \cite{JW} in the study of vortex solutions in the abelian Chern-Simons theory.

The equations are invariant under the gauge transformations
$$ A_{\mu} \rightarrow A'_{\mu} = A_{\mu} + \partial_{\mu} \chi \, , \, \phi \rightarrow \phi' = e^{i\chi} \phi \, , \, D_{\mu} \rightarrow D'_{\mu} = \partial_{\mu}-iA'_{\mu} \, . $$
The most common gauges are the Coulomb gauge $\partial^j A_j =0$ , the Lorenz gauge $\partial^{\mu} A_{\mu} = 0$ and the temporal gauge $A_0 = 0$. In this paper we exclusively study the temporal gauge for finite energy data.

Global well-posedness in the Coulomb gauge was proven by Chae and Choe \cite{CC} for data $a_{\mu} \in H^a$ , $\phi_0 \in H^b$ , $\phi_1 \in H^{b-1}$ where $(a,b)=(l,l+1)$ with $l \ge 1$ , satisfying a compatibility condition and a class of Higgs potentials. Huh \cite{H} showed local well-posedness in the Coulomb gauge for $(a,b)=(\epsilon,1+\epsilon)$ and in the Lorenz gauge for $(a,b) = (\frac{3}{4}+\epsilon,\frac{9}{8}+\epsilon)$ or $(a,b)=(\frac{1}{2},\frac{3}{2})$ , and in the temporal gauge for $(a,b)=(l,l)$ with $l \ge \frac{3}{2}$ .

He also showed global well-posedness in the temporal gauge for data $\phi_0 \in H^2$, $\phi_1 \in H^1$, $a_i^{df} \in H^1$, $a_i^{cf} \in H^2$, where $a^{df}$ and $a^{cf}$ denote the divergence-free and curl-free part of $a$. 

The local well-posedness result in the Lorenz gauge was improved to $(a,b)=(l,l+1)$ and $l > \frac{1}{4}$ by Bournaveas \cite{B} and by Yuan \cite{Y}.  Also in Lorenz gauge the important global well-posedness result in energy space, where $a_{\mu} \in \dot{H}^{\frac{1}{2}}$ , $\phi_0 \in H^1,$  $\phi_1 \in L^2$ , was proven by Selberg and Tesfahun \cite{ST} under a sign condition on the potential V, and even unconditional well-posedness could be proven by Selberg and Oliveira da Silva \cite{SO}. In \cite{ST} the regularity assumptions on the data could also be lowered down in Lorenz gauge to $(a,b)=(l,l+\frac{1}{2})$ and $l > \frac{3}{8}$ . This latter result was improved to $l > \frac{1}{4}$ by Huh and Oh \cite{HO}. Global well-posedness in energy space and local well-posedness for $a_{\mu} \in \dot{H}^{\frac{1}{2}}$ , $\phi_0 \in H^{l+\frac{1}{2}}$ , $\phi_1 \in H^{l-\frac{1}{2}}$ for $\frac{1}{2} \ge l > \frac{1}{4}$ in Coulomb gauge was recently obtained by Oh \cite{O}. For all these results up to the paper by Chae and Choe \cite{CC} and Oh \cite{O} it was crucial to make use  of a null condition in the nonlinearity of the system.

A low regularity local well-posedness result in the temporal gauge  for the Yang-Mills equations was given by Tao \cite{T1}.

In this paper we consider exclusively the temporal gauge. We show local well-posedness in energy space and above for potentials $V$ of polynomial growth, more precisely for data $\phi_0 \in H^1$ , $\phi_1 \in L^2$ ,  $|\nabla|^{\epsilon} a_j \in H^{\frac{1}{2}}$ ($\epsilon > 0$ small), under the compatibility assumption $\partial_1 a_2 - \partial_2 a_1 = 2 Im(\overline{\phi}_0 \phi_1)$.  If $V$ satisfies the sign condition $V(r) \ge -\alpha^2 r$ $\forall\,  r \ge 0$ , where $\alpha >0$, this solution exists globally in time. 

Thus we directly show global well-posedness for finite energy data in temporal gauge, wehich was known before in the case of the Coulomb and the Lorenz gauge.

We use a contraction argument in $X^{s,b}$ - type spaces adapted to the phase functions $\tau \pm |\xi|$ on one hand and to the phase function $\tau$ on the other hand. We also take advantage of a null condition which appears in the nonlinearity. Most of the crucial arguments follow from the bilinear estimates in wave-Sobolev spaces established by d'Ancona, Foschi and Selberg \cite{AFS}, which rely on the Strichartz estimates. Moreover we use an estimate for the $L^6_x L^2_t$-norm for the solution of the wave equation which goes back to Tataru \cite{KMBT} and Tao \cite{T1}. When applying this estimate we partly follow Tao's arguments in the case of the Yang-Mills equations. For the global existence part, which of course relies on energy conservation, we adapt the proof of Selberg-Tesfahun \cite{ST} for the Lorenz gauge to the temporal gauge.

We denote the Fourier transform with respect to space and time by $\,\widehat{}$ . The operator
$|\nabla|^{\alpha}$ is defined by $(|\nabla|^{\alpha} f)(\xi) = |\xi|^{\alpha} ({\mathcal F}f)(\xi)$, where ${\mathcal F}$ is the Fourier transform, and similarly $ \langle \nabla \rangle^{\alpha}$. The inhomogeneous and homogeneous Sobolev spaces are denoted by $H^{s,p}$ and $\dot{H}^{s,p}$, respectively. For $p=2$ we simply denote them by $H^s$ and $\dot{H}^s$. We repeatedly use the Sobolev embeddings $\dot{H}^{s,p} \subset L^q$ for  $1<p\le q < \infty$ and $\frac{1}{q} = \frac{1}{p}-\frac{s}{2}$, and also $\dot{H}^{1+} \cap \dot{H}^{1-} \subset L^{\infty}$ in two space dimensions. \\
$a+ := a + \epsilon$ for a sufficiently small $\epsilon >0$ , so that $a<a+<a++$ , and similarly $a--<a-<a$ , and $\langle \cdot \rangle := (1+|\cdot|^2)^{\frac{1}{2}}$ .

We now formulate our main results and begin by defining the standard spaces $X^{s,b}_{\pm}$ of Bourgain-Klainerman-Machedon type belonging to the half waves as the completion of the Schwarz space  $\mathcal{S}({\mathbb R}^3)$ with respect to the norm
$$ \|u\|_{X^{s,b}_{\pm}} = \| \langle \xi \rangle^s \langle  \tau \pm |\xi| \rangle^b \widehat{u}(\tau,\xi) \|_{L^2_{\tau \xi}} \, . $$ 
Similarly we define the wave-Sobolev spaces $X^{s,b}_{|\tau|=|\xi|}$ with norm
$$ \|u\|_{X^{s,b}_{|\tau|=|\xi|}} = \| \langle \xi \rangle^s \langle  |\tau| - |\xi| \rangle^b \widehat{u}(\tau,\xi) \|_{L^2_{\tau \xi}}  $$ and also $X^{s,b}_{\tau =0}$ with norm 
$$\|u\|_{X^{s,b}_{\tau=0}} = \| \langle \xi \rangle^s \langle  \tau  \rangle^b \widehat{u}(\tau,\xi) \|_{L^2_{\tau \xi}} \, .$$
We also define $X^{s,b}_{\pm}[0,T]$ as the space of the restrictions of functions in $X^{s,b}_{\pm}$ to $[0,T] \times \mathbb{R}^2$ and similarly $X^{s,b}_{|\tau| = |\xi|}[0,T]$ and $X^{s,b}_{\tau =0}[0,T]$. We frequently use the estimates $\|u\|_{X^{s,b}_{\pm}} \le \|u\|_{X^{s,b}_{|\tau|=|\xi|}}$ for $b \le 0$ and the reverse estimate for $b \ge 0$. 

Our main theorems read as follows:                                  
\begin{theorem}
\label{Theorem1.1}
Let $\epsilon > 0$ be sufficiently small, and $V \in C^{\infty}({\mathbb R}^+,{\mathbb R})$ , $V(0)=0$, and all derivatives have polynomial growth. The Chern-Simons-Higgs system (\ref{*1}),(\ref{*2}),(\ref{*3}) in temporal gauge $A_0=0$ with data $\phi_0 \in H^1({\mathbb R}^2)$ , $ \phi_1 \in L^2({\mathbb R}^2)$ , $|\nabla|^{\epsilon} a_j \in H^{\frac{1}{2}}({\mathbb R}^2) $ satisfying the compatibility condition $\partial_1 a_2 - \partial_2 a_1 = 2 Im(\overline{\phi}_0 \phi_1)$ has a unique local solution $$\phi \in C^0([0,T],H^1({\mathbb R}^2)) \cap C^1([0,T],L^2({\mathbb R}^2)) \, , \,  |\nabla|^{\epsilon} A \in C^0([0,T],H^{\frac{1}{2}}({\mathbb R}^2)) \,.  $$
More precisely, $T$ only depends on the data norm
$$\|\phi_0\|_{H^1} + \|\phi_1\|_{L^2} + \| |\nabla|^{\epsilon} A^{cf}(0)\|_{H^{\frac{1}{2}}} $$
and $\phi = \phi_+ + \phi_-$ with $\phi_{\pm} \in X_{\pm}^{1,\frac{1}{2}+\epsilon-}[0,T]$. If $A= A^{df} + A^{cf} =-(-\Delta)^{-1} \nabla \,div\, A + (-\Delta)^{-1} \,curl\, curl A$ is the decomposition into its divergence-free and its curl-free part, one has $ \nabla A^{cf} \in X^{{-\frac{1}{2}+\epsilon},\frac{1}{2}+}_{\tau=0} [0,T]$ , $|\nabla|^{\epsilon} A^{cf} \in C^0([0,T],L^2({\mathbb R}^2))$ , $\nabla A^{df} \in X^{{0},\frac{1}{2}-}_{|\tau|=|\xi|}[0,T],$ $|\nabla|^{\epsilon} A^{df} \in C^0([0,T],L^2({\mathbb R}^2))$, and in these spaces uniqueness holds. Moreover one has $\nabla A^{df} \in X^{-\delta,\frac{1}{2}+\delta-}_{|\tau| =|\xi|}[0,T]$ for $0<\delta<\epsilon$, and higher regularity persists. In particular the solution is smooth, if the data are smooth.
\end{theorem}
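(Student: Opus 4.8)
The plan is to reformulate the system in temporal gauge so that it becomes a system of nonlinear wave equations for $\phi$ coupled with transport-type (elliptic-in-time) equations for the spatial gauge potentials $A_j$, and then to run a Picard iteration in the product of the Bourgain-type spaces $X^{1,\frac12+\epsilon-}_\pm$ for $\phi$, $X^{-\frac12+\epsilon,\frac12+}_{\tau=0}$ for $\nabla A^{cf}$, and $X^{0,\frac12-}_{|\tau|=|\xi|}$ for $\nabla A^{df}$. I would start from the identity $\Box\phi = D_\mu D^\mu\phi + (\text{lower order})$: in temporal gauge $A_0=0$ one has $D_\mu D^\mu\phi = \partial_t^2\phi - \Delta\phi + 2iA^j\partial_j\phi + i(\partial_j A^j)\phi + |A|^2\phi$, so equation \eqref{*2} becomes $\Box\phi = -2iA^j\partial_j\phi - i(\operatorname{div}A)\phi - |A|^2\phi - \phi V'(|\phi|^2)$. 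For the gauge potential, the spatial part of \eqref{*1} with $\mu=0$, $\nu=j$ reads $\partial_t A_j = F_{0j} = 2\epsilon^{0j\rho}\operatorname{Im}(\overline\phi D^\rho\phi)$, which after splitting $A_j = A_j^{df}+A_j^{cf}$ gives an ODE in $t$ (integrate from the data) for the potentials in terms of a quadratic expression in $\phi$; the compatibility condition $\operatorname{curl}a = 2\operatorname{Im}(\overline\phi_0\phi_1)$ is exactly what is needed for consistency of the $\mu,\nu$ spatial components of \eqref{*1} at $t=0$, and the remaining constraint propagates.

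**The estimates.** The heart of the matter is to close the iteration, and here there are three groups of bilinear/trilinear estimates to prove. First, the terms $A^j\partial_j\phi$ and $(\operatorname{div}A)\phi$ in the $\phi$-equation: for the $df$-part one uses the null structure — writing $A^{df,j}\partial_j\phi$ and exploiting that $A^{df}$ is divergence-free exposes a $Q_{ij}$-type null form, which is handled by the d'Ancona–Foschi–Selberg bilinear estimates in $X^{s,b}_{|\tau|=|\xi|}$ spaces combined with the transfer of $A^{df}$-regularity; for the $cf$-part, $A^{cf}$ is a pure time-integral of a quadratic in $\phi$ and one estimates $\nabla A^{cf}$ in $X^{-\frac12+\epsilon,\frac12+}_{\tau=0}$ using the product estimate $H^1\cdot L^2\hookrightarrow$ the relevant space, together with the extra $|\nabla|^\epsilon$ smoothing built into the data. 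Second, the cubic terms $|A|^2\phi$ and $\phi V'(|\phi|^2)$: these are lower order and are controlled by Sobolev embedding ($\dot H^{1+}\cap\dot H^{1-}\subset L^\infty$ in 2D) after one invokes the polynomial-growth hypothesis on $V$ and its derivatives to expand $V'(|\phi|^2)$ as a convergent series of monomials. Third — and this is what I expect to be the main obstacle — the $L^6_xL^2_t$ estimate for the wave equation (the Tataru–Tao estimate) must be used to recover the half-derivative loss in the $cf$-$\phi$ interaction: the quadratic form $\operatorname{Im}(\overline\phi\partial_t\phi)$ feeding $A^{cf}$ has no null structure, so one cannot afford to lose a full derivative, and the $L^6_xL^2_t$ bound for $\phi_\pm$ (which is precisely the endpoint that the $\epsilon$-loss and the $|\nabla|^\epsilon a_j\in H^{1/2}$ hypothesis are tuned to) is what buys back the needed room; following Tao's Yang–Mills argument, one dyadically decomposes the output frequency of $A^{cf}$, estimates the low-frequency output in $L^2_tL^\infty_x$ via Bernstein and the $L^6_xL^2_t$ bound, and sums.

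**Uniqueness, persistence of regularity, and globalization.** Uniqueness in the stated spaces follows from the same multilinear estimates applied to the difference of two solutions — the iteration map is a contraction, so Lipschitz dependence and uniqueness are automatic once the a priori bounds are in place; the only subtlety is that the $cf$-potential is reconstructed by time integration, so one must check that two solutions with the same data produce the same $A^{cf}$ on $[0,T]$, which is immediate from Gronwall. For persistence of higher regularity one re-runs the estimates with $\langle\nabla\rangle^s$ inserted and uses the fractional Leibniz rule; the claim $\nabla A^{df}\in X^{-\delta,\frac12+\delta-}_{|\tau|=|\xi|}$ for $0<\delta<\epsilon$ is obtained by interpolating the two $df$-bounds already proven. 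Finally, global existence under $V(r)\ge-\alpha^2 r$ comes from energy conservation: the conserved energy $E(t)$ together with the sign condition controls $\|D\phi(t)\|_{L^2}$ and hence, via the diamagnetic-type inequality and a bootstrap on $\|A(t)\|$, controls $\|\phi(t)\|_{H^1}+\|\phi_1\|_{L^2}+\||\nabla|^\epsilon A^{cf}(t)\|_{H^{1/2}}$ uniformly in $t$; since the local existence time $T$ depends only on this norm, the solution extends to all of $\mathbb R$ — this is the Selberg–Tesfahun argument transplanted from Lorenz to temporal gauge, where the main point to verify is that in temporal gauge the bound on $A$ needed to pass from $\|D\phi\|_{L^2}$ to $\|\nabla\phi\|_{L^2}$ is still available from the evolution equation for $A_j$.
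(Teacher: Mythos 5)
Your overall architecture (df/cf splitting, iteration in $X^{1,\frac12+\epsilon-}_{\pm}$ for $\phi$, $X^{-\frac12+\epsilon,\frac12+}_{\tau=0}$ for $\nabla A^{cf}$, $X^{0,\frac12-}_{|\tau|=|\xi|}$ for $\nabla A^{df}$, the d'Ancona--Foschi--Selberg product theorem plus the Tataru--Tao $L^6_xL^2_t$ bound for the $A^{cf}\cdot\nabla\phi$ interaction, and the Selberg--Tesfahun energy argument for globalization) matches the paper. But there are two structural errors in where you locate the key mechanisms, and the second one is a genuine gap. First, you propose to recover \emph{both} $A^{df}$ and $A^{cf}$ by integrating $\partial_t A_j = 2\epsilon_{ij}\mathrm{Im}(\overline{\phi}D^i\phi)$ in time. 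The paper instead obtains $A^{df}$ \emph{elliptically at each fixed time} from the propagated constraint $\partial_1A_2-\partial_2A_1=2\,\mathrm{Im}(\overline{\phi}\partial_t\phi)$, i.e.\ $A^{df}_1=-2\Delta^{-1}\partial_2\mathrm{Im}(\overline{\phi}\partial_t\phi)$, $A^{df}_2=2\Delta^{-1}\partial_1\mathrm{Im}(\overline{\phi}\partial_t\phi)$; only $A^{cf}$ is time-integrated. This is what gives $A^{df}$ the wave-Sobolev modulation regularity $\nabla A^{df}\in X^{0,\frac12-}_{|\tau|=|\xi|}$ asserted in the theorem and used to estimate $A^{df}\nabla\phi$ by the AFS theorem \emph{without any null structure}. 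A time-integrated $A^{df}$ lives only in $X_{\tau=0}$-type spaces, and your proposed $Q_{ij}$ null-form trick for $A^{df,j}\partial_j\phi$ would then force you to control the stream function $B$ with $\Delta B=\operatorname{curl}A$, which is the elliptic determination again.

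The more serious problem is that you have placed the null structure on the wrong term and explicitly denied it where it is indispensable. The quadratic form $\mathrm{Im}(\overline{\phi}\partial_t\phi)$ feeds $A^{df}$, not $A^{cf}$. What feeds $\partial_tA^{cf}$ is, up to cubic terms, $\Delta^{-1}\nabla\,\mathrm{Im}(\partial_1\overline{\phi}\partial_2\phi-\partial_2\overline{\phi}\partial_1\phi)$ --- precisely a $Q_{12}$ null form --- and exploiting the angular gain $|\xi^i\eta^j-\xi^j\eta^i|\le|\xi||\eta|\angle(\xi,\eta)$ together with the angle--modulation lemma is the longest and most delicate estimate of the whole proof (the paper's Claim 1). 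Without it, the bound you would need, namely $\|\nabla\overline{\phi}\,\nabla\phi\|_{X^{-\frac12+\epsilon,-\frac12+}_{\tau=0}}\lesssim\|\nabla\phi\|^2_{X^{0,\frac12+\epsilon-}_{|\tau|=|\xi|}}$, fails for a generic product of two gradients (the total Sobolev exponent $\frac12-\epsilon$ is far below the admissible threshold, and parallel high-frequency interactions produce an unbounded contribution that only the vanishing of the null symbol kills). So the step of your plan that places $\nabla A^{cf}$ in $X^{-\frac12+\epsilon,\frac12+}_{\tau=0}$ via ``the product estimate $H^1\cdot L^2$'' does not close, and since every subsequent estimate ($A^{cf}\nabla\phi$, $\partial^jA^{cf}_j\phi$, the cubic terms) consumes exactly this norm, the iteration breaks down at this point.
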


\begin{theorem}
\label{Theorem1.2}
Assume in addition that $V(r) \ge \alpha^2 r$ for all $r\ge 0$ for some $\alpha >0$. Then the solution of Theorem \ref{Theorem1.1} exists globally in time.
\end{theorem}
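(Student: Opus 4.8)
The plan is to deduce global existence from the local result of Theorem~\ref{Theorem1.1} by an a~priori bound, on every interval $[0,T]$, of the data norm that governs the lifespan, propagated with at most exponential growth. Since $A_0=0$ forces $D_0\phi=\partial_t\phi$, conservation of $E(t)$ together with the hypothesis $V(r)\ge\alpha^2r\ge0$ yields, as long as the solution exists,
$$\sum_{\mu=0}^2\|D_\mu\phi(t)\|_{L^2}^2\le E(0),\qquad \alpha^2\|\phi(t)\|_{L^2}^2\le\int_{{\mathbb R}^2}V(|\phi(t)|^2)\,dx\le E(0).$$
Hence $\|\phi(t)\|_{L^2}$, $\|\partial_t\phi(t)\|_{L^2}=\|D_0\phi(t)\|_{L^2}$ and each $\|D_j\phi(t)\|_{L^2}$ are bounded uniformly in $t$ by a constant $C_0=C_0(E(0),\alpha)$. (Under the weaker sign condition $V(r)\ge-\alpha^2r$ of the abstract one combines this with $\frac{d}{dt}\|\phi\|_{L^2}^2\le\|\phi\|_{L^2}^2+\|\partial_t\phi\|_{L^2}^2$ and Gronwall's lemma to obtain the same bounds with $C_0$ replaced by $C_0e^{C_0t}$; the remaining argument is unaffected.)

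By Theorem~\ref{Theorem1.1} the solution extends as long as $\mathcal{N}(t):=\|\phi(t)\|_{H^1}+\|\partial_t\phi(t)\|_{L^2}+\||\nabla|^{\epsilon}A^{cf}(t)\|_{H^{1/2}}$ stays finite, and since $\phi_\pm\in X^{1,\frac12+\epsilon-}_\pm[0,T]\hookrightarrow C^0([0,T],H^1)$ it suffices to bound $\mathcal{N}$ on every finite interval. To this end one recovers $A$ from the curvature equations: in temporal gauge \eqref{*1} gives $\partial_tA_j=F_{0j}$, a bilinear expression in $\phi$ and $D_k\phi$, and $\partial_1A_2-\partial_2A_1=F_{12}=2\,Im(\overline\phi\,\partial_t\phi)$. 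Thus $A^{df}(t)$ is obtained elliptically (Biot--Savart) from $2\,Im(\overline\phi\,\partial_t\phi)(t)$, which in two dimensions is controlled by Gagliardo--Nirenberg and Sobolev embedding by $\|\phi(t)\|_{L^2}^{1/2}\|\nabla\phi(t)\|_{L^2}^{1/2}\|\partial_t\phi(t)\|_{L^2}\lesssim C_0^{3/2}\|\nabla\phi(t)\|_{L^2}^{1/2}$, while $A^{cf}(t)=A^{cf}(0)+\int_0^t\partial_tA^{cf}(s)\,ds$ with $\partial_tA^{cf}$ the curl-free (Leray, Riesz--type) projection of $F_{0j}$; the null structure of these bilinear terms, exploited through the estimates of \cite{AFS} and the $L^6_xL^2_t$ bound, is exactly what places them in $X^{-\frac12+\epsilon,\frac12+}_{\tau=0}$ and $X^{0,\frac12-}_{|\tau|=|\xi|}$. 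Note that in every such term the only factor not controlled purely by $C_0$ is a \emph{subcritical} power of $\mathcal{N}$.

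The core of the global argument, adapting Selberg--Tesfahun \cite{ST}, is to show that the contraction mapping underlying Theorem~\ref{Theorem1.1} may be run on any interval $[t_0,t_0+\delta]$ with step size $\delta=\delta(C_0)>0$ depending \emph{only} on the energy-controlled quantities $\|\phi(t_0)\|_{L^2}$, $\|\partial_t\phi(t_0)\|_{L^2}$ --- and not on $\|\nabla\phi(t_0)\|_{L^2}$ or $\||\nabla|^{\epsilon}A^{cf}(t_0)\|_{H^{1/2}}$ --- and that on such an interval all the relevant $X^{s,b}[t_0,t_0+\delta]$-norms of $\phi_\pm$, $\nabla A^{cf}$, $\nabla A^{df}$ are bounded by $C(C_0)\,\mathcal{N}(t_0)$. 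This works because in every bilinear/trilinear estimate for the nonlinear terms $2iA^j\partial_j\phi$, $-i(\,div\,A^{cf})\phi$, $A^jA_j\phi$ and $\phi V'(|\phi|^2)$ at least one factor is bounded purely in terms of $C_0$ (times, for the contributions of $A^{df}$, a subcritical power of $\mathcal{N}$), whereas the standard gain of a positive power of the interval length in the restricted $X^{s,b}$-norms --- available thanks to the surplus in the exponents $\frac12+\epsilon-$ and $\frac12-$ --- supplies the contraction factor. Cutting $[0,T]$ into $\lceil T/\delta\rceil$ such intervals, restarting the solution at each $t_k=k\delta$ (where the compatibility condition $\partial_1A_2-\partial_2A_1=2\,Im(\overline\phi\,\partial_t\phi)$ holds automatically since the solution satisfies \eqref{*1}), and invoking time-translation invariance, one gets $\mathcal{N}(t_{k+1})\le C(C_0)\,\mathcal{N}(t_k)$, hence $\mathcal{N}(t)\le C(C_0)^{\lceil T/\delta\rceil}(\mathcal{N}(0)+1)$ on $[0,T]$; by the continuation criterion the solution is global.

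The only real difficulty lies in this last step: establishing that both the lifespan and the growth of every $X^{s,b}$-norm are controlled by the conserved energy alone, i.e.\ that the $H^1$-level quantities enter the nonlinear estimates only linearly, up to subcritical powers, with coefficients depending on $E(0)$. This is precisely where the null condition and the refined bilinear and $L^6_xL^2_t$ estimates are indispensable, and where the adaptation of the Lorenz-gauge proof of \cite{ST} to the first-order-in-time structure of the temporal-gauge field equations --- which forces $A^{cf}$ to be reconstructed by integrating a bilinear term in time rather than by solving a wave equation --- requires the most care.
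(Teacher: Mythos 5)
Your first step (energy conservation plus the sign condition, with the Gronwall variant for $V(r)\ge-\alpha^2 r$, giving a uniform a priori bound on $\|\phi(t)\|_{L^2}$, $\|\partial_t\phi(t)\|_{L^2}$ and $\sum_j\|D_j\phi(t)\|_{L^2}$) agrees with the paper. But the heart of your argument --- that the contraction can be run with a step size $\delta$ depending \emph{only} on the energy-controlled quantities, and that every nonlinear estimate has ``at least one factor bounded purely in terms of $C_0$'' so that $\mathcal{N}(t_{k+1})\le C(C_0)\,\mathcal{N}(t_k)$ --- is asserted, not proved, and the mechanism you describe does not work as stated. The source term for $A^{cf}$ is the null form $\partial_i\overline{\phi}\,\partial_j\phi-\partial_j\overline{\phi}\,\partial_i\phi$, in which \emph{both} factors live at the level of $\|\nabla\phi\|_{L^2}$; and $\|\nabla\phi(t_0)\|_{L^2}$ is \emph{not} controlled by the energy --- only the covariant quantity $\|D_j\phi(t_0)\|_{L^2}=\|(\partial_j-iA_j)\phi(t_0)\|_{L^2}$ is, and the discrepancy $A_j\phi$ involves $A^{cf}(t_0)$, which is produced by integrating a quadratic term in time and has no a priori energy bound. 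Likewise Theorem \ref{Theorem1.1} makes the lifespan depend on $\||\nabla|^{\epsilon}A^{cf}(t_0)\|_{H^{1/2}}$, which you cannot simply drop. Your own closing paragraph concedes that this is ``the only real difficulty''; that difficulty is precisely the content of the proof, and it is left unresolved.

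The idea you are missing is the paper's use of the \emph{residual gauge freedom} of the temporal gauge: at each restart time $t_0$ one applies the time-independent gauge transformation $\chi=(-\Delta)^{-1}\mathrm{div}\,A(t_0)$, which preserves $A_0=0$, preserves $I(t)$ and $E(t)$, and arranges $(A')^{cf}(t_0)=0$. This kills the $A^{cf}$-contribution to the data norm, and then
$$\|\partial_j\phi'(t_0)\|_{L^2}\le\|D_j'\phi'(t_0)\|_{L^2}+\|(A'_j)^{df}(t_0)\,\phi'(t_0)\|_{L^2},$$
where $(A')^{df}(t_0)=A^{df}(t_0)$ is determined elliptically by $\overline{\phi}\,\partial_t\phi$ and is estimated, via the covariant Sobolev (Gagliardo--Nirenberg) inequality $\|\phi\|_{L^4}\lesssim\|\phi\|_{L^2}^{1/2}(\sum_j\|D_j\phi\|_{L^2})^{1/2}$, purely in terms of $I(t_0)$. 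Only after this gauge change does the local existence time of Theorem \ref{Theorem1.1} depend solely on $I(t_0)$, which is the quantity bounded a priori by the energy; one then solves on $[t_0,t_0+\delta]$ and reverses the gauge transformation. Without this step your iteration scheme cannot close, because the quantity governing the lifespan is not the one conserved.
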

\noindent {\bf Remark:} 1. Under our assumptions on the data the energy $E(0)$ is finite. We namely have
$\|D_j \phi(0)\|_{L^2} \lesssim \| \partial_j \phi(0)\|_{L^2} + \|a_j \phi_0\|_{L^2} \lesssim \|\phi_0\|_{H^1} + \|\nabla|^{\epsilon} a_j\|_{H^{\frac{1}{2}}} \|\phi_0\|_{H^1} < \infty $
and $V(|\phi|^2) \in L^1$, because $H^1 \subset L^p$ for all $2 \le p < \infty$. \\
2. Persistence of higher regularity is a standard fact for solutions constructed by a Picard iteration, so we omit its proof.

\section{Reformulation of the problem}
In the temporal gauge $A_0 =0$ the Chern-Simons-Higgs system
 (\ref{*1}),(\ref{*2}) is equivalent to the following system
\begin{align}
\label{**1}
&\partial_t A_j   = 2 \epsilon_{ij} Im(\overline{\phi} D^i \phi) \\
\label{**2}
&\partial_t^2 \phi - D^j D_j \phi  = - \phi V'(|\phi|^2) \\
\nonumber
&\Longleftrightarrow \Box \phi  = 2i A^j \partial_j \phi - i \partial_j A^j \phi + A^j A_j \phi - \phi V'(|\phi|^2) \\
\label{1.9}
&\partial_1 A_2 - \partial_2 A_1  = 2 Im(\overline{\phi} \partial_t \phi) \, ,
\end{align}
where $i,j=1,2$ , $\epsilon_{12}=1$ , $\epsilon_{21}=-1$ and $\Box = \partial_t^2 - \partial_1^2 - \partial_2^2$ . \\
We remark that (\ref{1.9}) is fulfilled for any solution of (\ref{**1}),(\ref{**2}), if it holds initially, i.e. , if the following compatibility condition holds, which we assume from now on:
\begin{equation}
\label{1.9init}
\partial_1 A_2(0) - \partial_2 A_1(0) = 2 Im(\overline{\phi}(0) (\partial_t \phi)(0)) \, .
\end{equation}
Indeed, we have by (\ref{**1}) and (\ref{**2}):
$$ \partial_t(\partial_1 A_2 - \partial_2 A_1) = 2 Im(\overline{\phi}(D_1^2 \phi + D_2^2 \phi)) = 2 Im(\overline{\phi} \partial_t^2 \phi) = 2 \partial_t Im(\overline{\phi} \partial_t \phi) \, . $$
Thus we only have to solve (\ref{**1}) and (\ref{**2}), and can assume that (\ref{1.9}) is fulfilled. We make the standard decomposition of $A=(A_1,A_2)$ into its divergence-free part $A^{df}$ and its curl-free part $A^{cf}$, namely $A=A^{df} +A^{cf}$, where
\begin{align*}
A^{df} & = (-\Delta)^{-1}(\partial_1 \partial_2 A_2 - \partial_2^2 A_1,\partial_1 \partial_2 A_1 - \partial_1^2 A_2) = (-\Delta)^{-1} \,curl\, curl A\, , \\
A^{cf} & = -(-\Delta)^{-1}(\partial_1 \partial_2 A_2 + \partial_1^2 A_1, \partial_1 \partial_2 A_1 + \partial_2^2 A_2) = -(-\Delta)^{-1} \nabla \,div\, A \, .
\end{align*}
Let $B$ be defined by $A^{df}_1 = -\partial_2 B$ , $A^{df}_2 = \partial_1 B$ . Then by (\ref{1.9}) and $\partial_1 A^{cf}_2 - \partial_2 A^{cf}_1 =0$ we obtain
$$ \Delta B = \partial_1 A_2^{df} - \partial_2 A_1^{df} = 2 Im(\overline{\phi} \partial_t \phi) \, ,$$
so that
\begin{equation}
\label{***2}
A^{df}_1 = -2 \Delta^{-1} \partial_2 Im(\overline{\phi} \partial_t \phi) \, , \, A^{df}_2 = 2 \Delta^{-1} \partial_1 Im(\overline{\phi} \partial_t \phi) \, .
\end{equation}
Next we calculate $\partial_t A^{cf}$ for solutions $(A,\phi)$ of (\ref{**1}),(\ref{**2}):
\begin{align}
\nonumber
\partial_t A^{cf}_1 &= \Delta^{-1} \partial_1(\partial_2 \partial_t A_2 + \partial_1 \partial_t A_1) \\
\nonumber
& = 2 \Delta^{-1} \partial_1(\partial_2 Im(\overline{\phi} D_1 \phi) - \partial_1 Im(\overline{\phi} D_2 \phi) \\
\nonumber
& = 2 \Delta^{-1} \partial_1 Im(\partial_2 \overline{\phi} D_1 \phi + \overline{\phi} \partial_2 D_1 \phi - \partial_1 \overline{\phi} D_2 \phi - \overline{\phi} \partial_1 D_2 \phi) \\
\nonumber
& = 2 \Delta^{-1} \partial_1 Im [\partial_2 \overline{\phi}\partial_1 \phi - \partial_1 \overline{\phi} \partial_2 \phi -i A_1 \partial_2 \overline{\phi} \phi + i A_2 \partial_1 \overline{\phi} \phi \\
\nonumber
& \quad + \overline{\phi}(-i A_1 \partial_2 \phi + i A_2 \partial_1 \phi -i \partial_2 A_1 \phi + i \partial_1 A_2 \phi) ] \\
\nonumber
& = 2 \Delta^{-1} \partial_1 Im [\partial_2 \overline{\phi} \partial_1 \phi - \partial_1 \overline{\phi} \partial_2 \phi + i \overline{\phi}(A_2 \partial_1 \phi -A_1 \partial_2 \phi) \\
\nonumber
&  \quad + i \phi(A_2 \partial_1 \overline{\phi} - A_1 \partial_2 \overline{\phi}) + i(\partial_1 A_2 - \partial_2 A_1) |\phi|^2 ] \\
\nonumber
& = 2 \Delta^{-1} \partial_1 Im(\partial_2 \overline{\phi} \partial_1 \phi - \partial_1 \overline{\phi} \partial_2 \phi)  + 2 \Delta^{-1} \partial_1(A_2 \partial_1 |\phi|^2 -A_1 \partial_2 |\phi|^2) \\
\label{***3a}
& \quad + 4 \Delta^{-1} \partial_1 Im(\overline{\phi} \partial_t \phi) |\phi|^2 \, .
\end{align}
Similarly
\begin{align}
\nonumber
 \partial_t A^{cf}_2 & = 2 \Delta^{-1} \partial_2 Im(\partial_1 \overline{\phi} \partial_2 \phi - \partial_2 \overline{\phi} \partial_1 \phi)  + 2 \Delta^{-1} \partial_2(A_1 \partial_2 |\phi|^2 -A_2 \partial_1 |\phi|^2) \\
\label{***3b}
& \quad + 4 \Delta^{-1} \partial_2 Im(\overline{\phi} \partial_t \phi) |\phi|^2 \, .
\end{align}

Moreover from (\ref{**2}) we obtain using $\partial^j A_j^{df} =0$:
\begin{equation}
\label{***1}
\Box \phi = 2i A^{cf} \nabla \phi +2i A^{df} \nabla \phi -i \partial^j A_j^{cf} \phi + (A^{df,j} + A^{cf,j})(A^{df}_j + A^{cf}_j) \phi - \phi V'(|\phi|^2)  .
\end{equation}

We also obtain from (\ref{***2}) and (\ref{**2})
$$ \partial_t A^{df}_2 = 2 \partial_t \Delta^{-1} \partial_1 Im(\overline{\phi} \partial_t \phi) =2 \Delta^{-1} \partial_1 Im(\overline{\phi} \partial_t^2 \phi) = 2\Delta^{-1} \partial_1 Im(\overline{\phi} D^j D_j \phi) \, . $$
Now
\begin{align*}
Im(\overline{\phi} D^j D_j \phi) & = Im(\overline{\phi} \partial^j \partial_j \phi - 2i \overline{\phi} A_j \partial^j \phi - i \overline{\phi} \partial^j A_j \phi) \\
& = Im(\overline{\phi} \partial^j \partial_j \phi - i \overline{\phi} A_j \partial^j \phi -i A_j \phi \partial^j \overline{\phi}- i \overline{\phi} \partial^j A_j \phi) \\
& = \partial^j Im(\overline{\phi} D_j \phi) \, ,
\end{align*}
so that
\begin{equation}
\label{****1}
\partial_t A^{df}_2 = 2\Delta^{-1} \partial_1 \partial^j Im(\overline{\phi} D_j \phi) \, .
\end{equation}
Similarly we obtain
\begin{equation}
\label{****2}
\partial_t A^{df}_1 = - 2\Delta^{-1} \partial_2 \partial^j Im(\overline{\phi} D_j \phi) \, .
\end{equation}
Reversely defining $A:= A^{df} + A^{cf}$ we show that our new system (\ref{***2}),(\ref{***3a}),(\ref{***3b}),(\ref{***1}) implies (\ref{**1}),(\ref{**2}) and also (\ref{1.9}), provided the compatability condition (\ref{1.9init}) is fulfilled. (\ref{**2}) is obvious. (\ref{1.9}) is fulfilled because by use of (\ref{***3a}) and (\ref{***3b})  one easily checks $\partial_1 A^{cf}_2 - \partial_2 A^{cf}_1 = 0$, so that by (\ref{***2})
$$ \partial_t (\partial_1 A_2 - \partial_2 A_1) = \partial_t( \partial_1 A^{df}_2 - \partial_2 A^{df}_1) = \partial_t Im(\overline{\phi} \partial_t \phi) \, . $$
Thus (\ref{1.9}) is fulfilled, if (\ref{1.9init}) holds. Finally we obtain
\begin{align*}
&\partial_t A_1  = \partial_t A_1^{cf} + \partial_t A^{df}_1 \\
& = 2 \Delta^{-1} \partial_1(\partial_2 Im(\overline{\phi} D_1 \phi) - \partial_1 Im(\overline{\phi} D_2 \phi))\hspace{-0.1em} - \hspace{-0.1em} 2 \Delta^{-1} \partial_2(\partial_1 Im(\overline{\phi} D_1 \phi) + \partial_2 Im(\overline{\phi} D_2 \phi)) \\
& = - 2 Im(\overline{\phi} D_2 \phi) \, ,
\end{align*}
where we used (\ref{***3a}) and also (\ref{****2}), which was shown to be a consequence of (\ref{***2}) and (\ref{***1}).
Similarly we also get
$$ \partial _t A_2 = 2 Im(\overline{\phi} D_1 \phi) \, , $$
so that (\ref{**1}) is shown to be satisfied.

Summarizing we have shown that (\ref{**1}),(\ref{**2}),(\ref{1.9}) are equivalent to (\ref{***2}),(\ref{***3a}),(\ref{***3b}),(\ref{***1}) (which also implies (\ref{****1}),(\ref{****2})).

Concerning the initial conditions assume we are given initial data for our system (\ref{**1}),(\ref{**2}),(\ref{1.9}):
$$ A_j(0) = a_j \, , \, \phi(0) = \phi_0 \, , \, (\partial_t \phi)(0) = \phi_1 $$
satisfying $|\nabla|^{\epsilon}a_j \in H^{\frac{1}{2}}$ , $\phi_0 \in H^1 $ , $ \phi_1 \in L^2$ and (\ref{1.9init}). Then by (\ref{***2}) and (\ref{1.9init}) we obtain
\begin{align*}
A^{df}_1(0) &= -2 \Delta^{-1} \partial_2 Im(\overline{\phi}_0 \phi_1) = - \Delta^{-1} \partial_2(\partial_1 a_2 - \partial_2 a_1)  \\
A^{df}_2(0)& = 2 \Delta^{-1} \partial_1 Im(\overline{\phi}_0 \phi_1) =  \Delta^{-1} \partial_1(\partial_1 a_2 - \partial_2 a_1) 
\end{align*}
and 
$$ A^{cf}_j(0) = a_j - A^{df}_j(0)  ,$$
thus $$|\nabla|^{\epsilon} A^{df}_j(0)\in H^{\frac{1}{2}} \, , \, |\nabla|^{\epsilon}A^{cf}_j(0) \in H^{\frac{1}{2}}\, .$$
In the sequel we construct a solution of the Cauchy problem for (\ref{***2}),(\ref{***3a}),(\ref{***3b}), (\ref{***1}) with data $\phi_0 \in H^1$ , $ \phi_1 \in L^2$ , $|\nabla|^{\epsilon} A^{cf}_j(0) \in H^{\frac{1}{2}}$. We have shown that whenever we have a local solution of this system with data $\phi_0,\phi_1$ and $A_j^{cf}(0) = a_j - A^{df}_j (0)$, where $A^{df}_1(0) = -2 \Delta^{-1} \partial_2 Im(\overline{\phi}_0 \phi_1)$ , 
$A^{df}_2(0) = 2 \Delta^{-1} \partial_1 Im(\overline{\phi}_0 \phi_1)$, we also have that $(\phi,A)$ with $A:= A^{df} + A^{cf}$ is a local solution of (\ref{**1}),(\ref{**2}) with data $(\phi_0,\phi_1,a_1,a_2)$. If (\ref{1.9init}) holds then (\ref{1.9}) is also satisfied.

Defining
$$ \phi_{\pm} = \frac{1}{2}(\phi \pm i^{-1} \langle \nabla \rangle^{-1} \partial_t \phi) \, \Longleftrightarrow \, \phi=\phi_+ +\phi_- \, , \, \partial_t \phi= i \langle \nabla \rangle (\phi_+ - \phi_-) $$
the equation (\ref{***1}) transforms to
\begin{align}
\label{***1'}
(i \partial_t \pm \langle \nabla \rangle) \phi_{\pm} 
 = &\pm 2^{-1} \langle \nabla \rangle^{-1} \big(2i A^{cf} \nabla \phi +2i A^{df} \nabla \phi -i \partial^j A_j^{cf} \phi \\
 \nonumber
 &+ (A^{df,j} + A^{cf,j})(A^{df}_j + A^{cf}_j) \phi - \phi V'(|\phi|^2) + \phi \big)
\end{align}

Fundamental for the proof of our theorem are the following bilinear estimates in wave-Sobolev spaces which were proven by d'Ancona, Foschi and Selberg in the two-dimensional case $n=2$ in \cite{AFS} in a more general form which include many limit cases which we do not need.
\begin{theorem}
\label{Theorem3}
Let $n=2$. The estimate
$$\|uv\|_{X_{|\tau|=|\xi|}^{-s_0,-b_0}} \lesssim \|u\|_{X^{s_1,b_1}_{|\tau|=|\xi|}} \|v\|_{X^{s_2,b_2}_{|\tau|=|\xi|}} $$ 
holds, provided the following conditions hold:
\begin{align*}
\nonumber
& b_0 + b_1 + b_2 > \frac{1}{2} \\
\nonumber
& b_0 + b_1 \ge 0 \\
\nonumber
& b_0 + b_2 \ge 0 \\
\nonumber
& b_1 + b_2 \ge 0 \\
\nonumber
&s_0+s_1+s_2 > \frac{3}{2} -(b_0+b_1+b_2) \\
\nonumber
&s_0+s_1+s_2 > 1 -\min(b_0+b_1,b_0+b_2,b_1+b_2) \\
\nonumber
&s_0+s_1+s_2 > \frac{1}{2} - \min(b_0,b_1,b_2) \\
\nonumber
&s_0+s_1+s_2 > \frac{3}{4} \\
 &(s_0 + b_0) +2s_1 + 2s_2 > 1 \\
\nonumber
&2s_0+(s_1+b_1)+2s_2 > 1 \\
\nonumber
&2s_0+2s_1+(s_2+b_2) > 1 \\
\nonumber
&s_1 + s_2 \ge \max(0,-b_0) \\
\nonumber
&s_0 + s_2 \ge \max(0,-b_1) \\
\nonumber
&s_0 + s_1 \ge \max(0,-b_2)   \, .
\end{align*}
\end{theorem}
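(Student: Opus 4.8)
The estimate is exactly the two--dimensional product law of d'Ancona, Foschi and Selberg, so the plan is essentially to quote \cite{AFS}; since, moreover, none of the borderline cases of their result are needed here, every inequality in the list may be established with an arbitrarily small loss of regularity, which removes the only genuinely delicate part of the argument. The structure I would follow is the standard one. By duality the claim is equivalent to a bound for the trilinear form $\int_{\mathbb{R}^{1+2}} u\, v\, w\, dt\, dx$ by $\|u\|_{X^{s_1,b_1}_{|\tau|=|\xi|}}\|v\|_{X^{s_2,b_2}_{|\tau|=|\xi|}}\|w\|_{X^{s_0,b_0}_{|\tau|=|\xi|}}$, which is symmetric in the three arguments and hence matches the symmetric shape of the hypotheses. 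Moreover the set of exponents $(s_0,s_1,s_2,b_0,b_1,b_2)$ for which this trilinear bound holds is cut out by finitely many affine inequalities, so it is a convex polyhedron; since enlarging any $s_j$ or $b_j$ only enlarges the right-hand side and preserves all the hypotheses, it suffices to verify the estimate at the finitely many extreme points of the admissible region and to interpolate.

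Fixing such an extreme exponent, I would decompose $u,v,w$ dyadically both in spatial frequency, $\langle\xi\rangle\sim N_j$, and in modulation, $\langle|\tau|-|\xi|\rangle\sim L_j$, and localize the two higher-frequency outputs to angular caps of the natural size. By the dispersion relation for the light cone one has, away from the trivial low-modulation and strongly unbalanced-frequency regimes, $N_{\max}\sim N_{\mathrm{med}}$ together with $\max(L_0,L_1,L_2)\gtrsim N_{\min}\,\theta^2$, where $\theta$ is the angle between the two interacting spatial frequencies. The analytic engine is then the family of $L^2_{t,x}$ bilinear estimates for products of functions supported in such frequency--modulation blocks on the cones $|\tau|=|\xi|$, i.e. the Klainerman--Machedon/Foschi--Klainerman type inequalities in $2+1$ dimensions, which themselves follow from the $L^4_{t,x}$ Strichartz estimate for the wave equation combined with the transversality of the two characteristic cones; this is precisely where the null geometry is used. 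Inserting these bounds into the trilinear form and summing the dyadic pieces against the weights $N_j^{s_j}L_j^{b_j}$, with Cauchy--Schwarz in the modulation variables, reduces the whole estimate to the convergence of a handful of geometric series, and one checks that the exponent inequalities in the statement are exactly the conditions making these series converge.

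I expect the main obstacle to be purely organizational rather than conceptual: one must run through the high--high, high--low and low--high frequency regimes and, within each, the subcases according to which of $L_0,L_1,L_2$ dominates, and confirm that in every regime the relevant subset of the listed hypotheses is what is needed. Because an $\epsilon$-loss is permitted throughout, the strict inequalities in the statement leave precisely the room required to absorb it, so no sharp dyadic counting is necessary. For the full proof, including the endpoint cases that are not needed here, we refer to \cite{AFS}.
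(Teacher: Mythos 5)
Your proposal matches the paper exactly: the paper gives no proof of this theorem at all, but simply imports it as the product law of d'Ancona, Foschi and Selberg \cite{AFS} (stated there in a more general form including limit cases not needed here), which is precisely what you do. Your accompanying sketch of the argument in \cite{AFS} (duality, reduction to non-endpoint exponents, dyadic frequency--modulation--angular decomposition, and Klainerman--Machedon type bilinear $L^2$ estimates) is a fair description of that reference's method, but the paper itself does not reproduce any of it.
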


We also need the following
\begin{prop}
The following estimates hold
\begin{align}
\label{Str}
\|u\|_{L^6_{xt}} & \lesssim \|u\|_{X^{\frac{1}{2},\frac{1}{2}+}_{|\tau|=|\xi|}} \, , \\
\label{T}
\|u\|_{L^6_x L^2_t} & \lesssim \|u\|_{X^{\frac{1}{6},\frac{1}{2}+}_{|\tau|=|\xi|}} \, .
\end{align}
\end{prop}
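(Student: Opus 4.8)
The plan is to reduce both \eqref{Str} and \eqref{T} to a single scheme: a fixed‑time Strichartz‑type bound for free half‑waves, upgraded to the $X^{s,b}$‑setting by the standard transfer principle. Thus I would first isolate the two free‑wave inputs and then run the transfer argument once, with the exponents $\tfrac12,\tfrac12$ for \eqref{Str} and $\tfrac16,\tfrac16$ for \eqref{T}.

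For the free inputs: for \eqref{Str} one uses the classical Strichartz estimate for the $2$-d wave equation at the wave‑admissible pair $(q,r)=(6,6)$, which lies on the line $\tfrac1q+\tfrac1{2r}=\tfrac14$ and strictly away from the forbidden endpoint $(4,\infty)$, with scaling‑critical regularity $s=1-\tfrac2r-\tfrac1q=\tfrac12$, giving the global‑in‑time bound
$$\|e^{\pm it|\nabla|}f\|_{L^6_{xt}(\mathbb{R}^{1+2})}\lesssim\||\nabla|^{\frac12}f\|_{L^2}\,.$$
For \eqref{T} one uses the $L^6_xL^2_t$ bound for free waves going back to Tataru \cite{KMBT} and exploited by Tao \cite{T1},
$$\|e^{\pm it|\nabla|}f\|_{L^6_xL^2_t(\mathbb{R}^2\times\mathbb{R})}\lesssim\||\nabla|^{\frac16}f\|_{L^2}\,,$$
whose exponent $\tfrac16=\tfrac{n-1}{2}-\tfrac nq$ ($n=2$, $q=6$) is again forced by scaling; I would simply quote it, though it can be obtained from Plancherel in $t$, the coarea formula, and the Stein--Tomas restriction estimate for the circle (whose exponent $6$ matches the required $L^6_x$).

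To transfer, I would split $\widehat u$ according to the sign of $\tau$; on $\{\tau\ge0\}$ one has $\big||\tau|-|\xi|\big|=|\tau-|\xi||$, so it suffices to treat that piece, the piece on $\{\tau<0\}$ being symmetric, and the two contributions are then summed. Setting $\rho:=\tau-|\xi|$ and letting $g_\rho$ be the function with spatial Fourier transform $\xi\mapsto\widehat u(|\xi|+\rho,\xi)$, one has on that piece $u(t,x)=c\int_{\mathbb{R}}e^{it\rho}\,(e^{it|\nabla|}g_\rho)(x)\,d\rho$. Since multiplication by the unimodular factor $e^{it\rho}$ is an isometry of both $L^6_{xt}$ and $L^6_xL^2_t$, Minkowski's inequality in $\rho$, then the free‑wave estimate, then Cauchy--Schwarz in $\rho$ against $\langle\rho\rangle^{-b}$ — which lies in $L^2_\rho$ precisely because $b=\tfrac12+>\tfrac12$, the sole role of the "$+$" — yields, using $|\xi|^{\sigma}\le\langle\xi\rangle^{\sigma}$,
\begin{align*}
\|u\|_{L^6_{xt}}&\lesssim\int_{\mathbb{R}}\||\nabla|^{\frac12}g_\rho\|_{L^2}\,d\rho \\
&\lesssim\Big(\int_{\mathbb{R}}\langle\rho\rangle^{-2b}\,d\rho\Big)^{\frac12}\big\|\langle\xi\rangle^{\frac12}\langle\rho\rangle^{b}\widehat u(|\xi|+\rho,\xi)\big\|_{L^2_{\rho,\xi}} \\
&\lesssim\|u\|_{X^{\frac12,b}_{|\tau|=|\xi|}}\,,
\end{align*}
after undoing the substitution $\tau=|\xi|+\rho$; the identical computation with $\tfrac16$ in place of $\tfrac12$ and $L^6_xL^2_t$ in place of $L^6_{xt}$ gives \eqref{T}.

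I expect the only genuinely non‑routine ingredient to be the free $L^6_xL^2_t$ estimate of Tataru; everything else — admissibility of $(6,6)$, the passage from homogeneous to inhomogeneous spatial weights, the half‑cone splitting, and the summation in the modulation variable — is standard bookkeeping, and as noted the $\tfrac12+$ in $b$ is needed only to make $\langle\rho\rangle^{-b}$ square‑integrable in the transfer step.
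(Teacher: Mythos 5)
Your proposal is correct and follows essentially the same route as the paper: both estimates are obtained by quoting the free-wave bounds (the classical $2+1$-dimensional Strichartz $L^6_{xt}$ estimate at $\dot H^{1/2}$ and the Tataru/Klainerman--Machedon $L^6_xL^2_t$ estimate at $\dot H^{1/6}$) and then invoking the transfer principle, the only cosmetic difference being that the paper derives the mixed-norm free estimate from the $L^2_\tau L^6_x$ form of \cite{KMBT} via Plancherel and Minkowski, a step you absorb into the quoted statement. Your explicit write-out of the transfer argument (half-cone splitting, foliation in the modulation variable $\rho$, Cauchy--Schwarz using $b>\tfrac12$) is the standard one and is correct.
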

\begin{proof}
(\ref{Str}) is the original Strichartz estimate \cite{Str} combined with the transfer principle. (\ref{T}) goes back to \cite{KMBT}, Thm. 3.2:
$$ \|{\mathcal F}_t u \|_{L^2_{\tau} L^6_x} \lesssim \|u_0\|_{\dot{H}^{\frac{1}{6}}} \, , $$
if $u= e^{it|\nabla|} u_0$ and ${\mathcal F}_t$ denotes the Fourier transform with respect to time. By Plancherel and Minkowski's inequality we obtain
$$\|u\|_{L^6_x L^2_t} = \|{\mathcal F}_t u\|_{L^6_x L^2_{\tau}} \lesssim \|{\mathcal F}_t u\|_{L^2_{\tau} L^6_x} \lesssim \|u_0\|_{\dot{H}^{\frac{1}{6}}} \, . $$
The transfer principle gives (\ref{T}).
\end{proof}
The following easy consequences are obtained by interpolation between (\ref{Str}), (\ref{T}) and the trivial identity
\begin{align}
\label{I0}
\|u\|_{L^2_{xt}} &= \|u\|_{X^{0,0}_{|\tau|=|\xi|}} \, : \\
\label{I1}
\|u\|_{L^6_x L^{2+}_t} & \lesssim \|u\|_{X^{\frac{1}{6}+,\frac{1}{2}+}_{|\tau|=|\xi|}} & \mbox{(interpolate (\ref{Str}) and (\ref{T}))} \, , \\
\label{I2}
\|u\|_{L^4_x L^4_t} & \lesssim \|u\|_{X^{\frac{3}{8},\frac{3}{8}+}_{|\tau|=|\xi|}} & \mbox{(interpolate (\ref{Str}) and (\ref{I0}))} \, , \\
\label{I3}
\|u\|_{L^4_x L^{2+}_t} & \lesssim \|u\|_{X^{\frac{1}{8}+,\frac{3}{8}+}_{|\tau|=|\xi|}} & \mbox{(interpolate (\ref{I1}) and (\ref{I0}))} \, , \\
\label{I4}
\|u\|_{L^{4+}_x L^{2+}_t} & \lesssim \|u\|_{X^{\frac{1}{8}+,\frac{3}{8}+}_{|\tau|=|\xi|}} & \mbox{(interpolate (\ref{I1}) and (\ref{I0}))} \, , \\
\label{I5}
\|u\|_{L^3_x L^{2}_t} & \lesssim \|u\|_{X^{\frac{1}{12},\frac{1}{4}+}_{|\tau|=|\xi|}} & \mbox{(interpolate (\ref{T}) and (\ref{I0}))} \, , \\
\label{I6}
\|u\|_{L^{\frac{2}{1-\epsilon}}_x L^{2}_t} & \lesssim \|u\|_{X^{\frac{\epsilon}{4},\frac{3}{4}\epsilon+}_{|\tau|=|\xi|}} & \mbox{(interpolate (\ref{T}) and (\ref{I0}))} \,.
\end{align}

\section{Proof of Theorem \ref{Theorem1.1}}
Taking the considerations of the previous section into account Theorem \ref{Theorem1.1} reduces to the following proposition and its corollary.
\begin{prop}
Let $\epsilon > 0$ be sufficiently small.
The system 
\begin{align}
\label{***1''}
(i \partial_t \pm \langle \nabla \rangle) \phi_{\pm} &
 = \pm 2^{-1} \langle \nabla \rangle^{-1} \big(2i A^{cf} \nabla \phi +2i A^{df} \nabla \phi -i\, \partial^j A_j^{cf} \phi \\
 \nonumber
 & \quad + (A^{df,j} + A^{cf,j})(A^{df}_j + A^{cf}_j) \phi - \phi V'(|\phi|^2) + \phi \big) \\
\label{***2'}
A^{df}_1 &= -2 \Delta^{-1} \partial_2 Im(\overline{\phi} \partial_t \phi) \quad , \quad
 A^{df}_2 = 2 \Delta^{-1} \partial_1 Im(\overline{\phi} \partial_t \phi) \\ 
 \nonumber
\partial_t A^{cf}_1 &= 2 \Delta^{-1} \partial_1 Im(\partial_2 \overline{\phi} \partial_1 \phi - \partial_1 \overline{\phi} \partial_2 \phi)  + 2 \Delta^{-1} \partial_1(A_2 \partial_1 |\phi|^2 -A_1 \partial_2 |\phi|^2) \\
\label{***3a'} 
& \quad + 4 \Delta^{-1} \partial_1 Im(\overline{\phi} \partial_t \phi) |\phi|^2 \, \\
\nonumber
 \partial_t A^{cf}_2 & = 2 \Delta^{-1} \partial_2 Im(\partial_1 \overline{\phi} \partial_2 \phi - \partial_2 \overline{\phi} \partial_1 \phi)  + 2 \Delta^{-1} \partial_2(A_1 \partial_2 |\phi|^2 -A_2 \partial_1 |\phi|^2) \\
\label{***3b'}
& \quad + 4 \Delta^{-1} \partial_2 Im(\overline{\phi} \partial_t \phi) |\phi|^2 \, . 
\end{align}
with data
$ \phi_{\pm}(0) \in H^{1} $ and $|\nabla|^{\epsilon}A^{cf}(0) \in H^{\frac{1}{2}}$ has a unique local solution $$\phi_{\pm} \in X^{1,\frac{1}{2}+\epsilon-}_{\pm}[0,T] \, , \, \nabla A^{cf} \in X^{-\frac{1}{2}+\epsilon,\frac{1}{2}+}_{\tau =0}[0,T] \, , \, |\nabla|^{\epsilon} A^{cf} \in C^0([0,T],L^2) \, .$$ 
Here $\phi=\phi_+ +\phi_- \, , \, \partial_t \phi= i \langle \nabla \rangle (\phi_+ - \phi_-)$ .
Moreover $A^{df}$ satisfies $\nabla A^{df} \in X^{0,\frac{1}{2}-}_{|\tau|=|\xi|}[0,T]$ , $|\nabla|^{\epsilon}A^{df} \in C^0([0,T],L^2)$ and  also  $\nabla A^{df} \in X^{-\delta,\frac {1}{2}+\delta-}_{|\tau| =|\xi|}[0,T]$ for $0<\delta<\epsilon$.
\end{prop}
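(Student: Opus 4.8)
The plan is to prove this by a contraction argument in a Banach space of $X^{s,b}$-type. Fix $T\in(0,1]$ (to be chosen) and set
$$ \mathcal{X}_T := \big\{(\phi_+,\phi_-,A^{cf}) : \phi_\pm\in X^{1,\frac12+\epsilon-}_\pm[0,T],\ \nabla A^{cf}\in X^{-\frac12+\epsilon,\frac12+}_{\tau=0}[0,T],\ |\nabla|^\epsilon A^{cf}\in C^0([0,T],L^2)\big\}, $$
with the obvious norm. Given $(\phi_+,\phi_-,A^{cf})\in\mathcal{X}_T$ we put $\phi=\phi_++\phi_-$, $\partial_t\phi=i\langle\nabla\rangle(\phi_+-\phi_-)$, \emph{define} $A^{df}$ (hence $A=A^{df}+A^{cf}$) by the algebraic formulas (\ref{***2'}), and produce a new triple by solving (\ref{***1''}) with the Duhamel formula for the half-wave propagators and solving (\ref{***3a'}), (\ref{***3b'}) by integration in $t$ from the datum $A^{cf}(0)$. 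The linear ingredients are standard: for $b>\frac12$ the energy inequality $\|\phi_\pm\|_{X^{1,\frac12+\epsilon-}_\pm[0,T]}\lesssim\|\phi_\pm(0)\|_{H^1}+T^{\theta}\|F_\pm\|_{X^{1,-\frac12+\epsilon-}_\pm[0,T]}$, the analogous inequality with the weight $\langle\tau\rangle$ for the $A^{cf}$-equation (a first-order ODE in $t$), and the elementary observation that $|\nabla|^\epsilon A^{cf}\in C^0([0,T],L^2)$ once the right-hand sides of (\ref{***3a'}), (\ref{***3b'}) are suitably controlled in space-time. Each of these gains a small power $T^\theta$ after restriction to $[0,T]$, which is ultimately what closes the iteration.

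The substance lies in the multilinear estimates, which split into two kinds. The first consists of the terms built only from wave-type unknowns, handled via Theorem \ref{Theorem3}: the term $A^{df}\cdot\nabla\phi$ in (\ref{***1''}) (recall $\nabla A^{df}\in X^{0,\frac12-}_{|\tau|=|\xi|}$, so $A^{df}$ is again of $X_{|\tau|=|\xi|}$-type), the bilinear expression $Im(\overline\phi\,\partial_t\phi)$ defining $\nabla A^{df}$ through (\ref{***2'}), the leading term $Im(\partial_2\overline\phi\,\partial_1\phi-\partial_1\overline\phi\,\partial_2\phi)$ of $\partial_t A^{cf}$, and the purely $A^{df}$ part of the cubic term. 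At two of these points we exploit the \emph{null structure}: $A^{df}$ is divergence-free, so $A^{df}=(-\partial_2 B,\partial_1 B)$ with $\Delta B=2\,Im(\overline\phi\,\partial_t\phi)$, whence $A^{df}\cdot\nabla\phi=\partial_1 B\,\partial_2\phi-\partial_2 B\,\partial_1\phi$ is a $Q_{12}$ null form, and likewise $\partial_2\overline\phi\,\partial_1\phi-\partial_1\overline\phi\,\partial_2\phi$ is a $Q_{12}$ null form in $\overline\phi,\phi$. The elementary bound $|\sin\angle(\xi,\eta)|\lesssim(\langle\,|\tau_1|-|\xi_1|\,\rangle+\langle\,|\tau_2|-|\xi_2|\,\rangle+\langle\,|\tau|-|\xi|\,\rangle)^{1/2}\min(|\xi_1|,|\xi_2|)^{-1/2}$ turns the null symbol $|\xi_1||\xi_2|\sin\angle(\xi_1,\xi_2)$ into (almost) one extra power of the smaller input frequency at the cost of half a power of each modulation weight, and $\phi_\pm\in X^{1,\frac12+\epsilon-}_\pm$ carries $\epsilon$ of modulation regularity to spare beyond the $\frac12$ needed for time-continuity; distributing this surplus, one checks that the resulting exponents obey the (long but explicit) list of inequalities in Theorem \ref{Theorem3}. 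Rerunning the $Im(\overline\phi\,\partial_t\phi)$-estimate with the surplus spent on spatial rather than modulation regularity yields the improvement $\nabla A^{df}\in X^{-\delta,\frac12+\delta-}_{|\tau|=|\xi|}[0,T]$ for $0<\delta<\epsilon$, which — having $b>\frac12$ — is also what one uses to put $A^{df}$ into $C^0_t$-based norms in the remaining estimates. The potential term $-\phi V'(|\phi|^2)$ and the harmless $+\phi$ are dispatched with the polynomial growth of $V$, the embeddings $H^1(\mathbb{R}^2)\subset L^p$ $(p<\infty)$ and the Strichartz estimate (\ref{Str}); at $H^1$-regularity in two dimensions this nonlinearity is subcritical and presents no difficulty.

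The second kind of term — and this is the step I expect to be the main obstacle — consists of the pieces in which the curl-free field $A^{cf}$ multiplies $\phi$ or $\nabla\phi$: the terms $A^{cf}\cdot\nabla\phi$ and $(\partial^j A^{cf}_j)\phi$ in (\ref{***1''}), the cubic pieces carrying a factor $A^{cf}$, and the remaining (quadratic- and quartic-in-$\phi$) pieces of $\partial_t A^{cf}$. Here there is \emph{no} null structure — writing $A^{cf}=\nabla\psi$, the product $A^{cf}\cdot\nabla\phi=\nabla\psi\cdot\nabla\phi$ has no cancellation — and $A^{cf}$ only lives in the $X_{\tau=0}$-type space with its gradient at the low regularity $-\frac12+\epsilon$, so Theorem \ref{Theorem3}, which requires both factors to be of $X_{|\tau|=|\xi|}$-type, is unavailable; nor is the Strichartz pair $(q,r)=(2,6)$ admissible for the two-dimensional wave equation. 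Following Tao's treatment of the Yang--Mills equations, the way out is a paraproduct decomposition combined with Hölder in mixed Lebesgue norms. Since the relevant $\langle\tau\rangle$-exponent $\frac12+$ exceeds $\frac12$, the field $A^{cf}$ embeds into $C^0_t\dot H^{\epsilon}_x$ (this is precisely the stated $|\nabla|^\epsilon A^{cf}\in C^0_t L^2_x$) and, on high frequencies, into $C^0_t\dot H^{\frac12+\epsilon}_x$; by Sobolev embedding and Minkowski's inequality these place $A^{cf}$ in space-then-time norms such as $L^{2/(1-\epsilon)}_x L^\infty_t$ and $L^3_x L^\infty_t$. The companion wave factor — $\nabla\phi$, or rather $\langle\nabla\rangle^{-1}\nabla\phi\in X^{1,\frac12+\epsilon-}_{|\tau|=|\xi|}$ once the $\langle\nabla\rangle^{-1}$ of (\ref{***1''}) is commuted through the high$\times$low part of the paraproduct — is then placed in $L^{2/(1-\epsilon)}_x L^2_t$ or $L^3_x L^2_t$ by the Tataru--Tao estimate (\ref{T}) and its interpolants (\ref{I5}), (\ref{I6}); the remaining parts of the paraproduct (where $\langle\nabla\rangle^{-1}$ acts on the high output frequency) and the quartic pieces of $\partial_t A^{cf}$ are then comfortable, the latter by combining these ideas with Theorem \ref{Theorem3}. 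The borderline cases — the lowest, most slowly decaying frequencies of $A^{cf}$, together with the usual low-frequency care needed for the operators $\Delta^{-1}\partial$ — are exactly matched by the saturating Sobolev embedding into $L^{2/(1-\epsilon)}_x$ and by (\ref{I6}).

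Finally, multilinearity turns all of these bounds into both self-map and Lipschitz estimates, and collecting them with their $T^\theta$ factors shows that for $T$ small enough, depending only on $\|\phi_0\|_{H^1}+\|\phi_1\|_{L^2}+\||\nabla|^\epsilon A^{cf}(0)\|_{H^{1/2}}$, the map is a contraction on a suitable ball of $\mathcal{X}_T$; its fixed point is the asserted solution, and uniqueness in $\mathcal{X}_T$ follows. The time-continuity statements come from $b>\frac12$ and the linear theory, and the improved $A^{df}$-regularity was obtained along the way.
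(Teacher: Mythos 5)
Your architecture coincides with the paper's: a contraction in exactly these spaces, the $Q_{12}$ null form plus the angle estimate for the quadratic source of $\partial_t A^{cf}$, the d'Ancona--Foschi--Selberg estimates for the wave--wave products, the Tataru--Tao $L^6_xL^2_t$ bound and a low/high frequency splitting for the $A^{cf}$-terms, and a $T^{\theta}$ gain to close. Two points differ. First, the paper does not use any null structure for $A^{df}\nabla\phi$: since $\nabla A^{df}\in X^{0,\frac12-}_{|\tau|=|\xi|}$, the high-frequency part of $A^{df}$ sits in $X^{1,\frac12-}_{|\tau|=|\xi|}$ and Theorem \ref{Theorem3} applies directly, so your extra cancellation there is unnecessary (though not wrong). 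Second, and more substantially, for the hardest term $A^{cf,h}\nabla\phi$ the paper proves the \emph{lossless} bilinear bound $\|A^{cf,h}\nabla\phi\|_{L^2_{xt}}\lesssim\|A^{cf,h}\|_{X^{\frac12+,\frac12+}_{\tau=0}}\|\nabla\phi\|_{X^{0,\frac12+}_{|\tau|=|\xi|}}$ by Tao's averaging principle plus Schur's test: the two modulation weights confine the spatial frequency of the wave factor to an annulus of unit width, and the resulting spatial convolution estimate closes with no derivative to spare and no need to touch the $\langle\nabla\rangle^{-1}$ in the Duhamel formula. Your route instead recovers the $\tfrac16$--$\tfrac18$ of a derivative that naive H\"older--Strichartz loses by commuting $\langle\nabla\rangle^{-1}$ onto the high-frequency factor in a paraproduct; that can be made to work (and the high--high-to-low and genuinely low-frequency pieces are then handled exactly as in the paper), but one step of your justification is wrong as stated: you cannot place $A^{cf}$ in $L^{p}_xL^\infty_t$ with $p<\infty$ starting from $C^0_t\dot H^{s}_x$ ``by Sobolev and Minkowski'' --- Minkowski goes the other way, $\|f\|_{L^\infty_tL^p_x}\le\|f\|_{L^p_xL^\infty_t}$, and a bump translating in $x$ as $t$ varies shows the embedding you invoke is false. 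The correct substitute uses the full $\langle\tau\rangle^{\frac12+}$ weight you already carry: $\|f\|_{L^p_xL^\infty_t}\lesssim\|\langle D_t\rangle^{\frac12+}f\|_{L^p_xL^2_t}\lesssim\|f\|_{X^{1-\frac2p,\frac12+}_{\tau=0}}$, which for $p=\frac{4}{1-2\epsilon}$ matches the available regularity $X^{\frac12+\epsilon,\frac12+}_{\tau=0}$ of $A^{cf,h}$. With that repair your argument goes through; the paper's Schur-test mechanism is stronger per term but both suffice to close the iteration, and the paper additionally disposes of $(\partial^jA^{cf}_j)\phi$ by the slicker duality trick $\langle\xi\rangle/\langle\tau\rangle\lesssim\langle|\tau|-|\xi|\rangle$ reducing it to Theorem \ref{Theorem3}.
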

We obtain immediately
\begin{Cor}
The solution has the property
 $ \, \phi \in C^0([0,T],H^1) \cap C^1([0,T],L^2),$  $|\nabla|^{\epsilon} A^{cf} \in C^0([0,T],H^{\frac{1}{2}})$ and $|\nabla|^{\epsilon} A^{df} \in C^0([0,T],H^{1-\epsilon-}) \, . $
\end{Cor}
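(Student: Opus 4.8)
The plan is to set up a Picard iteration for the system (\ref{***1''})--(\ref{***3b'}) in the product space
$$
\mathcal{X}_T := \Big( X^{1,\frac{1}{2}+\epsilon-}_{+}[0,T] \times X^{1,\frac{1}{2}+\epsilon-}_{-}[0,T] \Big) \times X^{-\frac{1}{2}+\epsilon,\frac{1}{2}+}_{\tau=0}[0,T]^2 \, ,
$$
where the last factor holds $\nabla A^{cf}$, and to treat $A^{df}$ as an auxiliary quantity determined by $\phi$ through (\ref{***2'}). The standard energy inequality for the half-wave operators $(i\partial_t \pm \langle\nabla\rangle)$ and for the $\partial_t$ equations (\ref{***3a'}), (\ref{***3b'}) reduces everything, after inserting a time cutoff and paying a small positive power $T^{\theta}$, to proving multilinear estimates for each term on the right-hand sides. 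Concretely one needs:
\begin{itemize}
\item $\| \langle\nabla\rangle^{-1}(A^{cf}\nabla\phi) \|_{X^{1,-\frac{1}{2}+2\epsilon}_{\pm}} \lesssim \|\nabla A^{cf}\|_{X^{-\frac{1}{2}+\epsilon,\frac{1}{2}+}_{\tau=0}} \|\phi\|_{X^{1,\frac{1}{2}+\epsilon-}}$ and the analogous estimate with $A^{df}$, where for the latter the null structure of $A^{df}\cdot\nabla\phi$ (coming from $A^{df}$ being divergence-free, cf. (\ref{***2'})) must be exploited via Theorem \ref{Theorem3};
\item $\| \langle\nabla\rangle^{-1}(\partial^j A^{cf}_j\,\phi) \|_{X^{1,-\frac{1}{2}+2\epsilon}_{\pm}} \lesssim \|\nabla A^{cf}\|_{X^{-\frac{1}{2}+\epsilon,\frac{1}{2}+}_{\tau=0}} \|\phi\|_{X^{1,\frac{1}{2}+\epsilon-}}$, a purely elliptic-times-wave product;
\item the cubic term $\langle\nabla\rangle^{-1}(A^j A_j\,\phi)$, handled by iterating the Sobolev embeddings and the Strichartz-type bounds (\ref{Str})--(\ref{I6});
\item the potential term $\langle\nabla\rangle^{-1}(\phi V'(|\phi|^2))$, controlled by Moser-type estimates in $H^1$ together with the embedding $H^1\subset L^p$, $p<\infty$, and the polynomial-growth hypothesis on $V$;
\item for the $A^{cf}$-equations, estimates of the schematic form $\|\Delta^{-1}\partial(\partial\overline\phi\,\partial\phi)\|_{X^{-\frac{1}{2}+\epsilon,\frac{1}{2}+}_{\tau=0}} \lesssim \|\phi\|^2$, which is where the $L^6_x L^2_t$-bound (\ref{T}) and its interpolants (\ref{I5}), (\ref{I6}) enter to absorb the two derivatives landing on the $\phi$'s against the single gain from $\Delta^{-1}\partial$, and likewise for the terms $\Delta^{-1}\partial(A\,\partial|\phi|^2)$ and $\Delta^{-1}\partial\,\mathrm{Im}(\overline\phi\partial_t\phi)|\phi|^2$.
\end{itemize}

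For the $A^{df}$ claims one observes that by (\ref{***2'}) the space-time Fourier support of $\nabla A^{df}$ is essentially that of $\mathrm{Im}(\overline\phi\,\partial_t\phi)$, a product of two (half-)waves; writing $\phi=\phi_++\phi_-$ and $\partial_t\phi=i\langle\nabla\rangle(\phi_+-\phi_-)$ and applying Theorem \ref{Theorem3} with the appropriate exponents gives $\nabla A^{df}\in X^{0,\frac{1}{2}-}_{|\tau|=|\xi|}[0,T]$ and, trading regularity for modularity, $\nabla A^{df}\in X^{-\delta,\frac{1}{2}+\delta-}_{|\tau|=|\xi|}[0,T]$; the bound $|\nabla|^\epsilon A^{df}\in C^0([0,T],L^2)$ then follows from $X^{s,b}_{|\tau|=|\xi|}\subset C^0_t H^s$ for $b>\frac{1}{2}$ after noting $\nabla A^{df}$ has one more spatial derivative than needed. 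For $|\nabla|^\epsilon A^{cf}\in C^0([0,T],L^2)$ one integrates the $A^{cf}$-equations in time starting from the datum $|\nabla|^\epsilon A^{cf}(0)\in H^{\frac{1}{2}}\subset L^2$ and estimates the time integral of the right-hand side in $C^0_t L^2$ using the embeddings already invoked; the iteration map is then shown to be a contraction on a ball in $\mathcal{X}_T$ for $T$ small depending only on the stated data norm, and uniqueness follows from the same difference estimates.

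The main obstacle is the first bullet together with the $A^{cf}$-equation estimates: the curl-free potential $A^{cf}$ only satisfies a \emph{transport}-type equation $\partial_t A^{cf}=(\cdots)$, so $\nabla A^{cf}$ lives in the $X^{s,b}_{\tau=0}$ scale with a \emph{negative} regularity index $-\frac{1}{2}+\epsilon$, and in the product $A^{cf}\nabla\phi$ this negative index must be compensated purely by the extra smoothing $\langle\nabla\rangle^{-1}$ and by Hölder in the $L^6_x L^2_t$/$L^{2/(1-\epsilon)}_x L^2_t$ estimates (\ref{T}), (\ref{I6}) — there is no null structure available here, unlike in the $A^{df}$ interaction, so the estimate is genuinely borderline and forces the particular choice of the small parameter $\epsilon$ and of the $b$-indices $\frac{1}{2}+\epsilon-$ on the $\phi_\pm$ side. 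Verifying that all the inequalities of Theorem \ref{Theorem3} (and the analogous bookkeeping for the $\tau=0$ and mixed-$X^{s,b}$ products) are simultaneously satisfied with room to spare for a contraction is the technical heart of the argument; I expect the cubic and potential terms, by contrast, to be comparatively soft, handled by crude Sobolev embedding once the $\phi_\pm$ are in $X^{1,\frac{1}{2}+}$.
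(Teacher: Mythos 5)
Your overall framework is the same as the paper's: a contraction in $X^{1,\frac{1}{2}+\epsilon-}_{\pm}$ for $\phi_{\pm}$ and $X^{-\frac{1}{2}+\epsilon,\frac{1}{2}+}_{\tau=0}$ for $\nabla A^{cf}$ (plus $|\nabla|^{\epsilon}A^{cf}\in C^0_tL^2$ for the low frequencies), with $A^{df}$ eliminated via (\ref{***2'}), and the same toolbox of the d'Ancona--Foschi--Selberg estimates, the $L^6_xL^2_t$ bound (\ref{T}) and Tao's averaging/Schur argument for $A^{cf,h}\nabla\phi$. However, there is a genuine gap in where you locate the null condition. You attach it to the interaction $A^{df}\cdot\nabla\phi$ and assert that the source terms of the $A^{cf}$-equations, ``schematically $\Delta^{-1}\partial(\partial\overline{\phi}\,\partial\phi)$,'' can be absorbed by the mixed-norm Strichartz estimates alone. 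This is backwards relative to the paper, and the second half of the claim does not survive scrutiny: the quadratic source in (\ref{***3a'}), (\ref{***3b'}) is precisely the antisymmetric combination $\partial_i\overline{\phi}\,\partial_j\phi-\partial_j\overline{\phi}\,\partial_i\phi$, i.e.\ a null form $Q_{ij}(\overline{\phi},\phi)$, and the required bound in $X^{-\frac{1}{2}+\epsilon,-\frac{1}{2}+}_{\tau=0}$ for $\phi$ at $H^1\times L^2$ regularity fails for a generic product $\nabla\overline{\phi}\,\nabla\phi$ (the total exponents violate the hypotheses of Theorem \ref{Theorem3}, e.g.\ $s_0+s_1+s_2=-\frac{1}{2}+\epsilon<\frac{3}{4}$, and no amount of H\"older in (\ref{T})--(\ref{I6}) recovers this). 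The paper's Claim 1, which is the technical heart of the proof, extracts the angle factor $\angle(\pm_1\xi_1,\pm_2\xi_2)$ from the antisymmetry, bounds it by (\ref{angle}), and then runs a multi-case modulation analysis; without this step the iteration does not close.

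Conversely, the null structure you invoke for $A^{df}\cdot\nabla\phi$ is not needed there (and ``exploiting it via Theorem \ref{Theorem3}'' is not by itself meaningful, since that theorem contains no null-form gain): because $\nabla A^{df}$ is controlled in $X^{0,\frac{1}{2}-}_{|\tau|=|\xi|}$ by (\ref{24}), the high-frequency part of $A^{df}$ has a full unit of spatial regularity on the wave scale, and the plain product estimate of Theorem \ref{Theorem3} together with a low/high frequency splitting suffices (the paper's Claim 5). So your plan would work after swapping the roles: treat $A^{df}\nabla\phi$ by unstructured product estimates, and reserve the null-form/angle machinery for the $Q_{ij}(\overline{\phi},\phi)$ source of the $A^{cf}$-equations. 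The remaining ingredients you list (cubic terms, the potential term via polynomial growth, the $C^0_t$ statements from $b>\frac{1}{2}$ and from integrating the $\partial_tA^{cf}$ equation) match the paper's Claims 2--4 and 7--9 and are indeed the soft part.
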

\begin{proof}
We want to apply the contraction mapping principle for
$$\phi_{\pm} \in X_{\pm}^{1,\frac{1}{2}+\epsilon-}[0,T] \, , \nabla A^{cf} \in X_{\tau =0}^{-\frac{1}{2}+\epsilon,\frac{1}{2}+}[0,T] \, , \, |\nabla|^{\epsilon} A^{cf} \in C^0([0,T],L^2) \, . $$
By well-known arguments this is reduced to the estimates
of the right hand sides of (\ref{***1''}),(\ref{***3a'}) and (\ref{***3b'}) stated as claims 1-9 below. We start to control $\nabla A^{cf}$ in $ X^{-\frac{1}{2}+\epsilon,\frac{1}{2}+}_{\tau =0}$.\\
{\bf Claim 1:} $$ \|\partial_i \overline{\phi} \partial_j \phi - \partial_j \overline{\phi} \partial_i \phi \|_{X^{-\frac{1}{2}+\epsilon,-\frac{1}{2}+}_{\tau =0}} \lesssim \|\nabla \phi\|^2_{X^{0,\frac{1}{2}+\epsilon-}_{|\tau| = |\xi|}} \, . $$
Let $\pm_1$ and $\pm_2$ denote independent signs. Using
$$\partial_i \overline{\phi} \partial_j \phi - \partial_j \overline{\phi} \partial_i \phi  = \sum_{\pm_1,\pm_2} (\partial_i \overline{\phi}_{\pm_1} \partial_j \phi_{\pm_2} - \partial_j \overline{\phi}_{\pm_1} \partial_i \phi_{\pm_2})$$ 
it suffices to show 
$$ \|\partial_i \overline{\phi} \partial_j \psi - \partial_j \overline{\phi} \partial_i \psi \|_{X^{-\frac{1}{2}+\epsilon,-\frac{1}{2}+}_{\tau =0}} \lesssim \|\nabla \phi\|_{X^{0,\frac{1}{2}+\epsilon-}_{\pm_1}} \|\nabla \psi\|_{X^{s,\frac{1}{2}+\epsilon-}_{\pm_2}} \, . $$
We now use the null structure of this term in the form that for vectors $\xi=(\xi^1,\xi^2),$  $\eta=(\eta^1,\eta^2) \in {\mathbb R}^2$ the following estimate holds
$$ |\xi^i \eta^j - \xi^j \eta^i| \le |\xi| |\eta| \angle(\xi,\eta) \, , $$
where $\angle(\xi,\eta)$ denotes the angle between $\xi$ and $\eta$. The following lemma gives the decisive bound for the angle:
\begin{lemma} (\cite{S}, Lemma 2.1 or \cite{ST}, Lemma 3.2)
\begin{equation}
\label{angle}
\angle(\pm_1 \xi_1,\pm_2 \xi_2) \lesssim \Big(\frac{\langle \tau_1 \pm_1 |\xi_1| \rangle + \langle \tau_2 \pm_2 |\xi_2| \rangle}{\min(\langle \xi_1 \rangle,\langle \xi_2 \rangle)}\Big)^{\frac{1}{2}} 
+\Big( \frac{\langle |\tau_3|-|\xi_3| \rangle}{\min(\langle \xi_1 \rangle,\langle \xi_2 \rangle)}\Big)^{\frac{1}{2}-\epsilon}
\end{equation}
$\forall \, \xi_1,\xi_2,\xi_3 \in{\mathbb R}^2 \, , \, \tau_1,\tau_2,\tau_3 \in {\mathbb R}$ with $\xi_1+\xi_2+\xi_3=0$ and $\tau_1+\tau_2+\tau_3 =0$.
\end{lemma}
\noindent Thus the claimed estimate reduces to 
\begin{align}
\label{20}
\Big| \int_* \frac{\widehat{u}_1(\tau_1,\xi_1)}{\langle \tau_1 \pm_1 |\xi_1|\rangle^{\frac{1}{2}+\epsilon-}} 
\frac{\widehat{u}_2(\tau_2,\xi_2)}{\langle \tau_2 \pm_2 |\xi_2|\rangle^{\frac{1}{2}+\epsilon-}}
\frac{\widehat{u}_3(\tau_3,\xi_3)}{\langle \xi_3 \rangle^{\frac{1}{2}-\epsilon}\langle \tau_3 \rangle^{\frac{1}{2}-}} \angle(\pm_1 \xi_1,\pm_2 \xi_2) \Big| 
\lesssim \prod_{i=1}^3 \|u_i\|_{L^2_{xt}} \, ,
\end{align}
where * denotes integration over $\xi_1,\xi_2,\xi_3,\tau_1,\tau_2,\tau_3$ with $\xi_1+\xi_2+\xi_3=0$ and $\tau_1+\tau_2+\tau_3 =0$. We assume without loss of generality that $|\xi_1| \le |\xi_2|$ and the Fourier transforms are nonnnegative. We distinguish three cases according to which of the terms on the right hand side of (\ref{angle}) is dominant. \\
Case 1: The last term in (\ref{angle}) dominant. 
In this case (\ref{20}) reduces to
\begin{align*}
\Big| \int_* \frac{\widehat{u}_1(\tau_1,\xi_1)}{\langle |\tau_1| - |\xi_1|\rangle^{\frac{1}{2}+\epsilon-} \langle \xi_1 \rangle^{\frac{1}{2}-\epsilon}} 
\frac{\widehat{u}_2(\tau_2,\xi_2)}{\langle |\tau_2| - |\xi_2|\rangle^{\frac{1}{2}+\epsilon-}}
\frac{\widehat{u}_3(\tau_3,\xi_3)}{\langle \xi_3 \rangle^{\frac{1}{2}-\epsilon} \langle \tau_3 \rangle^{\frac{1}{2}-}} \langle |\tau_3|-|\xi_3 | \rangle^{\frac{1}{2}-\epsilon} \Big| \\
\lesssim \prod_{i=1}^3 \|u_i\|_{L^2_{xt}} \, .
\end{align*}
1.1: $|\tau_3| \ge \frac{|\xi_3|}{2}$. In this case
(\ref{20}) reduces to
\begin{align*}
\Big| \int_* \frac{\widehat{u}_1(\tau_1,\xi_1)}{\langle |\tau_1| - |\xi_1|\rangle^{\frac{1}{2}+\epsilon-} \langle \xi_1 \rangle^{\frac{1}{2}-\epsilon}} 
\frac{\widehat{u}_2(\tau_2,\xi_2)}{\langle |\tau_2| - |\xi_2|\rangle^{\frac{1}{2}+\epsilon-}}
\frac{\widehat{u}_3(\tau_3,\xi_3)}{\langle \xi_3 \rangle^{1-\epsilon-}} \langle |\tau_3|-|\xi_3 | \rangle^{\frac{1}{2}-\epsilon} \Big|
 \\
\lesssim \prod_{i=1}^3 \|u_i\|_{L^2_{xt}} \, ,
\end{align*}
which follows from Theorem \ref{Theorem3}. \\
1.2: $|\tau_3| \le \frac{|\xi_3|}{2}$ $\Rightarrow$ $\langle |\tau_3|-|\xi_3|\rangle \sim \langle \xi_3 \rangle$. \\
1.2.1: $|\tau_2| \ll |\xi_2|$.
We have to show
\begin{align*}
\Big| \int_* \frac{\widehat{u}_1(\tau_1,\xi_1)}{\langle |\tau_1| - |\xi_1|\rangle^{\frac{1}{2}+\epsilon-} \langle \xi_1 \rangle^{\frac{1}{2}-\epsilon}} 
\frac{\widehat{u}_2(\tau_2,\xi_2)}{ |\xi_2|^{\frac{1}{2}+\epsilon-}}
\frac{\widehat{u}_3(\tau_3,\xi_3)}{\langle \tau_3 \rangle^{\frac{1}{2}-}} \Big|
 \lesssim \prod_{i=1}^3 \|u_i\|_{L^2_{xt}} \, .
\end{align*}
But by (\ref{I3}) we obtain
\begin{align*}
\Big| \int v_1 v_2 v_3 dx dt \Big| & \lesssim \|v_1\|_{L^4_x L^{2+}_t} 
\|v_2\|_{L^4_x L^2_t} \|v_3\|_{L^2_x L^{\infty-}_t} \\
&\lesssim \|v_1\|_{X^{\frac{1}{8}+,\frac{3}{8}+}_{|\tau|=|\xi|}} \|v_2\|_{H^{\frac{1}{2}}_x L^2_t} \|v_3\|_{L^2_x H^{\frac{1}{2}-}_t} \\
&\lesssim \|v_1\|_{X^{\frac{1}{2}+\epsilon-,\frac{1}{2}-\epsilon}_{|\tau|=|\xi|}} \|v_2\|_{X^{\frac{1}{2}+\epsilon-,0}_{|\tau|=|\xi|}    } \|v_3\|_{X^{0,\frac{1}{2}-}_{\tau =0}} \, , \\
\end{align*}
as desired.\\
1.2.2: $|\tau_2| \gtrsim |\xi_2|$. 
In this case we use $\tau_1 + \tau_2 + \tau_3 =0$ to estimate
$$ 1 \lesssim \frac{\langle \tau_2 \rangle^{\frac{1}{4}}}{\langle \xi_2 \rangle^{\frac{1}{4}}} \lesssim \frac{\langle \tau_1 \rangle^{\frac{1}{4}}}{\langle \xi_2 \rangle^{\frac{1}{4}}} + \frac{\langle \tau_3 \rangle^{\frac{1}{4}}}{\langle \xi_2 \rangle^{\frac{1}{4}}} \, . $$
The second term on the right hand side is taken care of by
\begin{align*}
\Big| \int_* \frac{\widehat{u}_1(\tau_1,\xi_1)}{\langle |\tau_1| - |\xi_1|\rangle^{\frac{1}{2}+\epsilon-} \langle \xi_1 \rangle^{\frac{1}{2}-\epsilon}} 
\frac{\widehat{u}_2(\tau_2,\xi_2)}{\langle \xi_2\rangle^{\frac{1}{4}} \langle |\tau_2|-|\xi_2|\rangle^{\frac{1}{2}+\epsilon-}}
\frac{\widehat{u}_3(\tau_3,\xi_3)}{|\tau_3|^{\frac{1}{4}-}} \Big|
\lesssim\prod_{i=1}^3 \|u_i\|_{L^2_{xt}} \, .
\end{align*}
This follows from
\begin{align*}
\Big| \int v_1 v_2 v_3 dx dt \Big| & \lesssim \|v_1\|_{L^4_x L^4_t} 
\|v_2\|_{L^4_x L^{2+}_t} \|v_3\|_{L^2_x L^{4-}_t} \\
&\lesssim \|v_1\|_{X^{\frac{3}{8},\frac{3}{8}+}_{|\tau|=|\xi|}} \|v_2\|_{X^{\frac{1}{8}+,\frac{3}{8}+}_{|\tau|=|\xi|}    } \|v_3\|_{X^{0,\frac{1}{4}-}_{\tau =0}} \, , \\
\end{align*}
which holds by (\ref{I2}) and (\ref{I3}).\\
For the first term we consider two subcases.\\
1.2.2.1: $|\tau_1| \lesssim |\xi_1|$ $\Rightarrow$ $\frac{\langle \tau_1 \rangle^{\frac{1}{4}}}{\langle \xi_2 \rangle^{\frac{1}{4}}} \lesssim \frac{\langle \xi_1 \rangle^{\frac{1}{4}}}{\langle \xi_2 \rangle^{\frac{1}{4}}}$.\\
We have to show
\begin{align*}
\Big| \int_* \frac{\widehat{u}_1(\tau_1,\xi_1)}{\langle |\tau_1| - |\xi_1|\rangle^{\frac{1}{2}+\epsilon-} \langle \xi_1 \rangle^{\frac{1}{4}-\epsilon}} 
\frac{\widehat{u}_2(\tau_2,\xi_2)}{\langle 
\xi_2\rangle^{\frac{1}{4}} \langle |\tau_2|-|\xi_2|\rangle^{\frac{1}{2}+\epsilon-}}
\frac{\widehat{u}_3(\tau_3,\xi_3)}{\langle \tau_3 \rangle^{\frac{1}{4}-}} \Big|
\lesssim \prod_{i=1}^3 \|u_i\|_{L^2_{xt}} \, .
\end{align*}
This follows from (\ref{I3}) which gives
\begin{align*}
\Big| \int v_1 v_2 v_3 dx dt \Big| & \lesssim \|v_1\|_{L^4_x L^{2+}_t} 
\|v_2\|_{L^4_x L^{2+}_t} \|v_3\|_{L^2_x L^{4-}_t} \\
&\lesssim \|v_1\|_{X^{\frac{1}{8}+,\frac{3}{8}+}_{|\tau|=|\xi|}} \|v_2\|_{X^{\frac{1}{8}+,\frac{3}{8}+}_{|\tau|=|\xi|}    } \|v_3\|_{X^{0,\frac{1}{4}-}_{\tau =0}} \, . \\
\end{align*}
1.2.2.2: $|\tau_1| \gg |\xi_1|$ $\Rightarrow $ $\frac{\langle \tau_1 \rangle^{\frac{1}{4}}}{\langle \xi_2 \rangle^{\frac{1}{4}}} \sim \frac{\langle |\tau_1|-| \xi_1| \rangle^{\frac{1}{4}}}{\langle \xi_2 \rangle^{\frac{1}{4}}}$.\\
Thus we need
\begin{align*}
\Big| \int_* \frac{\widehat{u}_1(\tau_1,\xi_1)}{\langle |\tau_1| - |\xi_1|\rangle^{\frac{1}{2}+\epsilon-} \langle \xi_1 \rangle^{\frac{1}{2}-\epsilon}} 
\frac{\widehat{u}_2(\tau_2,\xi_2)}{\langle \xi_2\rangle^{\frac{1}{4}} \langle |\tau_2|-|\xi_2|\rangle^{\frac{1}{2}+\epsilon-}}
\frac{\widehat{u}_3(\tau_3,\xi_3)}{\langle \tau_3 \rangle^{\frac{1}{2}-}} \Big|
\lesssim \prod_{i=1}^3 \|u_i\|_{L^2_{xt}} \, .
\end{align*}
But we have by (\ref{I5}) and (\ref{I1})
\begin{align*}
\Big| \int v_1 v_2 v_3 dx dt \Big| & \lesssim \|v_1\|_{L^3_x L^{2}_t} 
\|v_2\|_{L^6_x L^{2+}_t} \|v_3\|_{L^2_x L^{\infty-}_t} \\
&\lesssim \|v_1\|_{X^{\frac{1}{12},\frac{1}{4}+}_{|\tau|=|\xi|}} \|v_2\|_{X^{\frac{1}{6}+,\frac{1}{2}+}_{|\tau|=|\xi|}    } \|v_3\|_{X^{0,\frac{1}{2}-}_{\tau =0}} \, , \\
\end{align*}
which gives the desired estimate.\\
Case 2: The first term in (\ref{angle}) is dominant (and $|\xi_1| \le |\xi_2|$). \\
We have to show
\begin{align*}
\Big| \int_* \frac{\widehat{u}_1(\tau_1,\xi_1)}{\langle |\tau_1| - |\xi_1|\rangle^{\epsilon-} \langle \xi_1 \rangle^{\frac{1}{2}}} 
\frac{\widehat{u}_2(\tau_2,\xi_2)}{\langle |\tau_2| - |\xi_2|\rangle^{\frac{1}{2}+\epsilon-}}
\frac{\widehat{u}_3(\tau_3,\xi_3)}{\langle \xi_3 \rangle^{\frac{1}{2}-\epsilon} \langle \tau_3 \rangle^{\frac{1}{2}-}} \Big|
\lesssim \prod_{i=1}^3 \|u_i\|_{L^2_{xt}} \, .
\end{align*}
2.1: $|\tau_2| \ll|\xi_2|$ $\Rightarrow$ $\langle |\tau_2|-|\xi_2| \rangle \sim \langle \xi_2 \rangle$. \\
The last estimate is reduced to
\begin{align*}
\Big| \int_* \frac{\widehat{u}_1(\tau_1,\xi_1)}{\langle |\tau_1| - |\xi_1|\rangle^{\epsilon-} \langle \xi_1 \rangle^{\frac{1}{2}}} 
\frac{\widehat{u}_2(\tau_2,\xi_2)}{\langle \tau_2 \rangle^{0+} \langle \xi_2 \rangle^{\frac{1}{2}+\epsilon--}}
\frac{\widehat{u}_3(\tau_3,\xi_3)}{\langle \xi_3 \rangle^{\frac{1}{2}-\epsilon} \langle \tau_3 \rangle^{\frac{1}{2}-}} \Big|
\lesssim \prod_{i=1}^3 \|u_i\|_{L^2_{xt}} \, ,
\end{align*}
which follows by Sobolev's embedding from the estimate
\begin{align*}
\Big| \int v_1 v_2 v_3 dx dt \Big| & \lesssim \|v_1\|_{L^4_x L^{2}_t} 
\|v_2\|_{L^4_x L^{2+}_t} \|v_3\|_{L^2_x L^{\infty-}_t} \\
&\lesssim \|v_1\|_{X^{\frac{1}{2},0}_{|\tau|=|\xi|}} \|v_2\|_{X^{\frac{1}{2},0+}_{\tau=0}} \|v_3\|_{X^{0,\frac{1}{2}-}_{\tau =0}} \, . \\
\end{align*}
2.2: $|\tau_2| \ge |\xi_2|$. \\
2.2.1: $|\tau_1| \lesssim |\xi_1|$.
We use
$$ 1 \lesssim \frac{\langle \tau_2 \rangle^{\frac{1}{2}-}}{\langle \xi_2 \rangle^{\frac{1}{2}-}} \lesssim \frac{\langle \tau_1 \rangle^{\frac{1}{2}-}}{\langle \xi_2 \rangle^{\frac{1}{2}-}} + \frac{\langle \tau_3 \rangle^{\frac{1}{2}-}}{\langle \xi_2 \rangle^{\frac{1}{2}-}} \, . $$
If the first term is dominant we reduce to
\begin{align*}
\Big| \int_* \frac{\widehat{u}_1(\tau_1,\xi_1)}{\langle |\tau_1| - |\xi_1|\rangle^{\epsilon-} \langle \xi_1 \rangle^{0+} }
\frac{\widehat{u}_2(\tau_2,\xi_2)}{\langle |\tau_2|-|\xi_2| \rangle^{\frac{1}{2}+\epsilon-} \langle \xi_2 \rangle^{\frac{1}{2}-}}
\frac{\widehat{u}_3(\tau_3,\xi_3)}{\langle \xi_3 \rangle^{\frac{1}{2}-\epsilon} \langle \tau_3 \rangle^{\frac{1}{2}-}} \Big|
\lesssim \prod_{i=1}^3 \|u_i\|_{L^2_{xt}} \, .
\end{align*}
It is a consequence of the following estimate which follows from (\ref{I1}) and Sobolev:
\begin{align*}
\Big| \int v_1 v_2 v_3 dx dt \Big| & \lesssim \|v_1\|_{L^2_x L^{2}_t} 
\|v_2\|_{L^6_x L^{2+}_t} \|v_3\|_{L^3_x L^{\infty-}_t} \\
&\lesssim \|v_1\|_{X^{0,0}_{|\tau|=|\xi|}} \|v_2\|_{X^{\frac{1}{6}+,\frac{1}{2}+}_{|\tau|=|\xi|}} \|v_3\|_{X^{\frac{1}{3},\frac{1}{2}-}_{\tau =0}} \, . \\
\end{align*}
If the second term is dominant we need
\begin{align*}
\Big| \int_* \frac{\widehat{u}_1(\tau_1,\xi_1)}{\langle |\tau_1| - |\xi_1|\rangle^{\epsilon-} \langle \xi_1 \rangle^{\frac{1}{2}}} 
\frac{\widehat{u}_2(\tau_2,\xi_2)}{\langle |\tau_2|-|\xi_2| \rangle^{\frac{1}{2}+\epsilon-} \langle \xi_2 \rangle^{\frac{1}{2}-}}
\frac{\widehat{u}_3(\tau_3,\xi_3)}{\langle \xi_3 \rangle^{\frac{1}{2}-\epsilon}} \Big|
\lesssim \prod_{i=1}^3 \|u_i\|_{L^2_{xt}} \, ,
\end{align*}
which follows from Theorem \ref{Theorem3}.\\
2.2.2: $|\tau_1| \gg |\xi_1|$ $\Rightarrow$ $\langle |\tau_1|-|\xi_1|\rangle \sim \langle \tau_1 \rangle$. \\
We use
$$ 1 \lesssim \frac{\langle \tau_2 \rangle^{\frac{\epsilon}{4}}}{\langle \xi_2 \rangle^{\frac{\epsilon}{4}}} \lesssim \frac{\langle \tau_1 \rangle^{\frac{\epsilon}{4}}}{\langle \xi_2 \rangle^{\frac{\epsilon}{4}}} + \frac{\langle \tau_3 \rangle^{\frac{\epsilon}{4}}}{\langle \xi_2 \rangle^{\frac{\epsilon}{4}}} \, . $$
The first term reduces to
\begin{align*}
\Big| \int_* \frac{\widehat{u}_1(\tau_1,\xi_1)}{ \langle\tau_1\rangle^{\frac{3}{4}\epsilon-} \langle \xi_1 \rangle^{\frac{1}{2}}} 
\frac{\widehat{u}_2(\tau_2,\xi_2)}{\langle |\tau_2|-|\xi_2| \rangle^{\frac{1}{2}+\epsilon-} \langle \xi_2 \rangle^{\frac{\epsilon}{4}}}
\frac{\widehat{u}_3(\tau_3,\xi_3)}{\langle \tau_3 \rangle^{\frac{1}{2}-} \langle \xi_3 \rangle^{\frac{1}{2}-\epsilon}} \Big|
\lesssim \prod_{i=1}^3 \|u_i\|_{L^2_{xt}} \, .
\end{align*}
We estimate using (\ref{I6}) and Sobolev:
\begin{align*}
\Big| \int v_1 v_2 v_3 dx dt \Big| & \lesssim \|v_1\|_{L^4_x L^{2+}_t} 
\|v_2\|_{L^{\frac{2}{1-\epsilon}}_x L^{2}_t} \|v_3\|_{L^{\frac{4}{1+2\epsilon}}_x L^{\infty-}_t} \\
& \lesssim \|v_1\|_{H^{\frac{1}{2}}_x H^{0+}_t} 
\|v_2\|_{X^{\frac{\epsilon}{4},\frac{3}{4}\epsilon+}_{|\tau|=|\xi|}} \|v_3\|_{H^{\frac{1}{2}-\epsilon}_x H^{\frac{1}{2}-}_t} \\
&\lesssim \|v_1\|_{X^{\frac{1}{2},0+}_{\tau=0}} \|v_2\|_{X^{\frac{\epsilon}{4},\frac{3}{4}\epsilon-}_{|\tau|=|\xi|}} \|v_3\|_{X^{\frac{1}{2}-\epsilon,\frac{1}{2}-}_{\tau =0}} \, , \\
\end{align*}
which gives the desired bound.\\
If the second term is dominant we have to show
\begin{align*}
\Big| \int_* \frac{\widehat{u}_1(\tau_1,\xi_1)}{ \langle \tau_1 \rangle^{\epsilon-} \langle \xi_1 \rangle^{\frac{1}{2}}} 
\frac{\widehat{u}_2(\tau_2,\xi_2)}{\langle |\tau_2|-|\xi_2| \rangle^{\frac{1}{2}+\epsilon-} \langle \xi_2 \rangle^{\frac{\epsilon}{4}}}
\frac{\widehat{u}_3(\tau_3,\xi_3)}{\langle \tau_3 \rangle^{\frac{1}{2}-\frac{\epsilon}{4}-} \langle \xi_3 \rangle^{\frac{1}{2}-\epsilon}} \Big|
\lesssim \prod_{i=1}^3 \|u_i\|_{L^2_{xt}} \, ,
\end{align*}
which follows from
\begin{align*}
\Big| \int v_1 v_2 v_3 dx dt \Big| & \lesssim \|v_1\|_{L^4_x L^{\frac{4}{2-\epsilon}+}_t} 
\|v_2\|_{L^{\frac{2}{1-\epsilon}}_x L^{2}_t} \|v_3\|_{L^{\frac{4}{1+2\epsilon}}_x L^{\frac{4}{\epsilon}-}_t} \\
& \lesssim \|v_1\|_{H^{\frac{1}{2}}_x H^{\frac{\epsilon}{4}+}_t} 
\|v_2\|_{H^{\frac{\epsilon}{4}}_x L^2_t} \|v_3\|_{H^{\frac{1}{2}-\epsilon}_x H^{\frac{1}{2}-\frac{\epsilon}{4}-}_t} \\
&\lesssim \|v_1\|_{X^{\frac{1}{2},\frac{\epsilon}{4}+}_{\tau=0}} \|v_2\|_{X^{\frac{\epsilon}{4},0}_{|\tau|=|\xi|}} \|v_3\|_{X^{\frac{1}{2}-\epsilon,\frac{1}{2}-\frac{\epsilon}{4}-}_{\tau =0}} \, , \\
\end{align*}
where we used Sobolev's embedding.\\
Case 3: The second term in (\ref{angle}) is dominant (and $|\xi_1| \le \xi_2|$). \\
In this case we reduce to
\begin{align*}
\Big| \int_* \frac{\widehat{u}_1(\tau_1,\xi_1)}{ \langle |\tau_1|-|\xi_1| \rangle^{\frac{1}{2}+\epsilon-} \langle \xi_1 \rangle^{\frac{1}{2}}} 
\frac{\widehat{u}_2(\tau_2,\xi_2)}{\langle |\tau_2|-|\xi_2| \rangle^{\epsilon-}}
\frac{\widehat{u}_3(\tau_3,\xi_3)}{\langle \tau_3 \rangle^{\frac{1}{2}-} \langle \xi_3 \rangle^{\frac{1}{2}-\epsilon}} \Big|
\lesssim \prod_{i=1}^3 \|u_i\|_{L^2_{xt}} \, ,
\end{align*}
which follows by Strichartz' (\ref{Str}) and Sobolev's estimates by
\begin{align*}
\Big| \int v_1 v_2 v_3 dx dt \Big| & \lesssim \|v_1\|_{L^6_x L^6_t} 
\|v_2\|_{L^2_x L^{2}_t} \|v_3\|_{L^3_x L^3_t} \\
&\lesssim \|v_1\|_{X^{\frac{1}{2},\frac{1}{2}+}_{|\tau|=|\xi|}} \|v_2\|_{X^{0,0}_{|\tau|=|\xi|}} \|v_3\|_{X^{\frac{1}{3},\frac{1}{6}}_{\tau =0}} \, . \\
\end{align*}
Claim 1 is now proven.

Before proceeding we estimate $A^{df}$. By Sobolev's embedding $\dot{H}^{1-\epsilon,\frac{2}{2-\epsilon}} \subset L^2$ we obtain:
\begin{align}
\label{23}
\||\nabla|^{\epsilon}A_i^{df}\|_{L^{\infty}_t L^2_x} & \lesssim \| \phi \partial_t \phi \|_{L^{\infty}_t \dot{H}^{-1+\epsilon}_x} \lesssim \| \phi \partial_t \phi \|_{L^{\infty}_t L^{\frac{2}{2-\epsilon}}_x} \\
\nonumber
& \lesssim \|\phi\|_{L^{\infty}_t L^{\frac{2}{1-\epsilon}}_x} \| \partial_t \phi \|_{L^{\infty}_t L^2_x} \lesssim \|\phi\|_{X^{1,\frac{1}{2}+}_{|\tau|=|\xi|}} \|\partial_t \phi\|_{X^{0,\frac{1}{2}+}_{|\tau|=|\xi|}} 
\end{align}
and for $\epsilon >0$:
\begin{equation}
\label{24}
\| \nabla A_i^{df}\|_{X^{0,\frac{1}{2}-}_{|\tau|=|\xi|}} \lesssim \|\phi \partial_t \phi \|_{X^{0,\frac{1}{2}-}_{|\tau|=|\xi|}} \lesssim \|\phi\|_{X^{1,\frac{1}{2}+\epsilon-}_{|\tau|=|\xi|}} \|\partial_t \phi\|_{X^{0,\frac{1}{2}+\epsilon-}_{|\tau|=|\xi|}}
\end{equation} 
by Theorem \ref{Theorem3}, which for $0 < \delta < \epsilon$ also implies
\begin{equation}
\label{25}
\| \nabla A_i^{df}\|_{X^{-\delta,\frac{1}{2}+\delta-}_{|\tau|=|\xi|}} \lesssim \|\phi \partial_t \phi \|_{X^{-\delta,\frac{1}{2}+\delta-}_{|\tau|=|\xi|}} \lesssim \|\phi\|_{X^{1,\frac{1}{2}+\epsilon-}_{|\tau|=|\xi|}} \|\partial_t \phi\|_{X^{0,\frac{1}{2}+\epsilon-}_{|\tau|=|\xi|}} \, .
\end{equation} 

The cubic terms are easier to handle, because they contain one derivative less.\\
{\bf Claim 2:}
 \begin{align*}
&\|A_i \partial_j (|\phi|^2)\|_{L^2_t H^{-\frac{1}{2}+\epsilon}_x} \\ 
& \lesssim (\| \nabla A_i^{cf}\|_{X^{-\frac{1}{2}+\epsilon,\frac{1}{2}+}_{|\tau|=|\xi|}} +  \||\nabla|^{\epsilon}A_i^{cf}\|_{L^{\infty}_t(L^2_x)}) \|\phi \|^2_{X^{1,\frac{1}{2}+}_{|\tau|=|\xi|}} 
 +\|\phi\|_{X^{1,\frac{1}{2}+}_{|\tau|=|\xi|}}^3  \|\partial_t \phi\|_{X^{0,\frac{1}{2}+}_{|\tau|=|\xi|}}  \, .
\end{align*}
We split $A=A^{cf} + A^{df}$, and moreover $A^{cf}_i = A^{cf,h}_i + A^{cf,l}_i$ as well as $A^{df}_i = A^{df,h}_i + A^{df,l}_i$ into their low and high frquency parts, i.e. $supp \,  \widehat{A}_i^{cf,h} \subset \{|\xi| \ge 1 \}$ , $supp \, \widehat{A}_i^{cf,l} \subset \{|\xi| \le 1 \}$ and similarly $A^{df}_i$.\\
For the high frequency parts we obtain by (\ref{Str}) and Sobolev
\begin{align*}
&\|A_i^{cf,h} \partial_j (|\phi|^2)\|_{L^2_t H^{-\frac{1}{2}+\epsilon}_x} 
 \lesssim \|A_i^{cf,h} \partial_j (|\phi|^2)\|_{L^2_t L^{\frac{4}{3}+\epsilon'}_x} \\
& \lesssim \| A^{cf,h}_i\|_{L^6_t L^6_x} \|\phi\|_{L^3_t L^{12+\epsilon''}_x} \|\partial_j \phi\|_{L^{\infty}_t L^2_x} 
 \lesssim \| A_i^{cf,h}\|_{X^{\frac{1}{2},\frac{1}{2}+}_{|\tau|=|\xi|}}  \|\phi\|^2_{X^{1,\frac{1}{2}+}_{|\tau|=|\xi|}}  \\
& \lesssim \| \nabla A_i^{cf}\|_{X^{-\frac{1}{2}+\epsilon,\frac{1}{2}+}_{|\tau|=|\xi|}}  \|\phi\|^2_{X^{1,\frac{1}{2}+}_{|\tau|=|\xi|}} 
\end{align*}
and using also (\ref{24}) we obtain
\begin{align*}
&\|A_i^{df,h} \partial_j (|\phi|^2)\|_{L^2_t H^{-\frac{1}{2}+\epsilon}_x} 
  \lesssim \| A^{df,h}_i\|_{L^6_t L^6_x} \|\phi\|^2_{X^{1,\frac{1}{2}+}_{|\tau|=|\xi|}}  \\
& \lesssim \| \nabla A_i^{df,h}\|_{X^{0,\frac{1}{2}-}_{|\tau|=|\xi|}}  \|\phi\|^2_{X^{1,\frac{1}{2}+}_{|\tau|=|\xi|}} 
 \lesssim \|\phi\|^3_{X^{1,\frac{1}{2}+\epsilon-}_{|\tau|=|\xi|}}  \|\partial_t \phi\|_{X^{0,\frac{1}{2}+\epsilon-}_{|\tau|=|\xi|}} \, .
\end{align*}
The low frequency part is taken care of as follows
\begin{align*}
&\|A_i^{cf,l} \partial_j (|\phi|^2)\|_{L^2_t H^{-\frac{1}{2}+\epsilon}_x}
\lesssim \|A_i^{cf,l} \partial_j (|\phi|^2)\|_{L^2_t L^{\frac{4}{3}+\epsilon'}_x} \\
&\lesssim \|A_i^{cf,l}\|_{L^{\infty}_t L^8_x} \|\phi\|_{L^4_t L^{8+\epsilon''}_x} \|\nabla \phi\|_{L^4_t L^2_x} 
 \lesssim \||\nabla|^{\epsilon}A_i^{cf}\|_{L^{\infty}_t L^2_x} \|\phi\|^2_{X^{1,\frac{1}{2}+}_{|\tau|=|\xi|}} \, .
 \end{align*}
 Similarly we obtain by (\ref{23}):
 \begin{align*}
 &\|A_i^{df,l} \partial_j (|\phi|^2)\|_{L^2_t H^{-\frac{1}{2}+\epsilon}_x}
\lesssim \||\nabla|^{\epsilon}A_i^{df}\|_{L^{\infty}_t L^2_x} \|\phi\|^2_{X^{1,\frac{1}{2}+}_{|\tau|=|\xi|}}
 \lesssim \|\phi\|^3_{X^{1,\frac{1}{2}+}_{|\tau|=|\xi|}} \|\partial_t \phi\|_{X^{0,\frac{1}{2}+}_{|\tau|=|\xi|}} \, .
 \end{align*}
{\bf Claim 3:} 
 $$\|Im{(\overline{\phi} \partial_t \phi) |\phi|^2 \|_{L^2_t H^{-\frac{1}{2}+\epsilon}_x}     \lesssim \|\phi\|^3_{X^{1,\frac{1}{2}+}_{|\tau|=|\xi|}} \|\partial_t \phi\|_{X^{0,\frac{1}{2}+}_{|\tau|=|\xi|}}} \, . $$
This follows from
\begin{align*}
\|Im(\overline{\phi} \partial_t \phi) |\phi|^2\|_{L^2_t H^{-\frac{1}{2}+\epsilon}_x} 
 \lesssim \|Im(\overline{\phi} \partial_t \phi) |\phi|^2\|_{L^2_t L^{\frac{4}{3}+\epsilon'}_x}
\lesssim \|\phi\|^3_{L^{\infty}_t L^{12+\epsilon''}_x}  \|\partial_t \phi\|_{L^2_t L^2_x}
\, .
\end{align*}

In order to control $\|A^{cf}\|_{L^{\infty}_t([0,T],L^2_x)}$ in the fixed point argument we only have to consider the low frequency part of $A^{cf}$, because the high frequency part is controlled by $\| \nabla A^{cf,h}\|_{X^{-\frac{1}{2}+\epsilon,\frac{1}{2}+}_{|\tau|=|\xi|}}$. Denote the projection onto the low frequency part of $A^{cf}_i$ by $P$. \\
{\bf Claim 4 (a)} 
$$ \int_0^T \|P(\nabla \phi \nabla \phi)\|_{\dot{H}^{-1+\epsilon}_x} dt \lesssim \int_0^T \|\nabla \phi\|_{L^2_x}^2 dt  \lesssim T \|\nabla \phi\|^2_{X^{0,\frac{1}{2}+}_{|\tau|=|\xi|}} $$
by Sobolev's embedding $\dot{H}_x^{1+\epsilon} \cap \dot{H}_x^{1-\epsilon} \subset L^{\infty}_x$, which implies
\begin{align*}
&\Big| \int P(\nabla \phi \nabla \phi)w dx \Big| = \Big| \int \nabla \phi \nabla \phi Pw dx \Big| \lesssim \|\nabla \phi\|_{L^2}^2 \|Pw\|_{L^{\infty}} \\
& \lesssim \|\nabla \phi\|_{L^2}^2(\||\nabla|^{1-\epsilon} Pw\|_{L^2} + \||\nabla|^{1+\epsilon}Pw\|_{L^2}) \lesssim \|\nabla \phi\|_{L^2}^2 \||\nabla|^{1-\epsilon} w\|_{L^2} \, .
\end{align*}
{\bf(b)} 
\begin{align*}
&\int_0^T \| A^{df} \nabla(|\phi|^2)\|_{\dot{H}^{-1+\epsilon}_x} dt  \lesssim \int_0^T \| A^{df} \nabla(|\phi|^2)\|_{L^{\frac{2}{2-\epsilon}}_x} dt \\
 &\lesssim \int_0^T \|A^{df} \|_{L^{\frac{2}{1-\epsilon}+}_x} \| \phi\|_{L^{\infty-}_x}  \|\nabla \phi\|_{L^2_x} dt 
 \lesssim T \||\nabla|^{\epsilon+} A^{df} \|_{L^{\infty}_t L^2_x} \| \phi\|_{L^{\infty}_t H^1_x}  \|\nabla \phi\|_{L^{\infty}_t L^2_x} \\
 & \hspace{16em}\lesssim T \|\phi\|^3_{X^{1,\frac{1}{2}+}_{|\tau|=|\xi|}}  \|\partial_t \phi\|_{X^{0,\frac{1}{2}+}_{|\tau|=|\xi|}}
\end{align*}
by (\ref{23}).\\
{\bf (c)} 
\begin{align*}
 \int_0^T \| A^{cf} \nabla(|\phi|^2)\|_{\dot{H}^{-1+\epsilon}_x} dt  &
\lesssim T \|A^{cf}\|_{L^{\infty}_t L^{\frac{2}{1-\epsilon}+}_x} \|\phi\|_{L^{\infty}_t L^{\infty-}_x} \|\nabla \phi\|_{L^{\infty}_t L^2_x} \\
& \lesssim T \||\nabla|^{\epsilon+}A^{cf}\|_{L^{\infty}_t L^2_x}\|\phi\|^2_{X^{1,\frac{1}{2}+}_{|\tau|=|\xi|}} \\
&\lesssim T (\||\nabla|^{\epsilon} A^{cf}\|_{L^{\infty}_t L^2_x} + \| \nabla A^{cf}\|_{X^{-\frac{1}{2}+\epsilon,\frac{1}{2}+}_{\tau=0}})\|\phi\|^2_{X^{1,\frac{1}{2}+}_{|\tau|=|\xi|}}
\end{align*}
{\bf (d)} 
\begin{align*}
&\int_0^T \|\phi (\partial_t \phi) |\phi|^2\|_{\dot{H}^{-1+\epsilon}_x} dt \lesssim \int_0^T \|\phi (\partial_t \phi) |\phi|^2\|_{L^\frac{2}{2-\epsilon}} dt \\
& \lesssim T \|\phi\|^3_{L^{\infty}_x H^1_x} \|\partial_t \phi\|_{L^{\infty}_t L^2_x} \lesssim T \|\phi\|^3_{X^{1,\frac{1}{2}+}_{|\tau|=|\xi|}} \|\partial_t \phi\|_{X^{0,\frac{1}{2}+}_{|\tau|=|\xi|}} \, .
\end{align*}
Next in order to estimate $\|\phi\|_{X^{1,\frac {1}{2}+\epsilon-}_{|\tau|=|\xi|}}$ and $\|\partial_t \phi\|_{X^{0,\frac {1}{2}+\epsilon-}_{|\tau|=|\xi|}}$ we have to control the right hand side of (\ref{***1'}).\\
{\bf Claim 5:}
$$ \|A^{df} \nabla \phi\|_{X^{0,-\frac{1}{2}+\epsilon}_{|\tau|=|\xi|}} 
\lesssim (\||\nabla|^{\epsilon} A^{df}\|_{L^{\infty}_t L^2_x} + \|\nabla A^{df}\|_{X^{0,\frac{1}{2}-}_{|\tau|=|\xi|}}) \|\nabla \phi\|_{X^{0,\frac{1}{2}+\epsilon-}_{|\tau|=|\xi|}}
\lesssim
\|\phi\|^3_{X^{1,\frac{1}{2}+\epsilon-}_{|\tau|=|\xi|}} \, . $$
The last estimate follows from (\ref{23}) and (\ref{24}). For the first estimate we consider the low and high frequency parts of $A^{df}$ as follows:
\begin{align*}
&\|A^{df,l} \nabla \phi \|_{L^2_{xt}} \lesssim T^{\frac{1}{2}} \|A^{df,l}\|_{L^{\infty}_t L^{\infty}_x} \|\nabla \phi \|_{L^{\infty}_t L^{2}_x} \\
& \lesssim T^{\frac{1}{2}} \||\nabla|^{1-}A^{df,l}\|_{L^{\infty}_t L^2_x} \|\nabla \phi\|_{L^{\infty}_t L^2_x}
 \lesssim T^{\frac{1}{2}} \||\nabla|^{\epsilon}A^{df}\|_{L^{\infty}_t L^2_x} \|\nabla \phi\|_{X^{0,\frac{1}{2}+}_{|\tau|=|\xi|}} \, ,\\
& \|A^{df,h} \nabla \phi\|_{X^{0,-\frac{1}{2}+\epsilon}_{|\tau|=|\xi|}} 
\lesssim \| A^{df,h}\|_{X^{1,\frac{1}{2}-}_{|\tau|=|\xi|}}) \|\nabla \phi\|_{X^{0,\frac{1}{2}+\epsilon-}_{|\tau|=|\xi|}} \, ,
\end{align*}
where the last estimate follows by Theorem \ref{Theorem3}. \\
{\bf Claim 6:}
$$ \|A^{cf} \nabla \phi\|_{L^2_{xt}} \lesssim (\|\nabla A^{cf}\|_{X^{-\frac{1}{2}+,\frac{1}{2}+}_{\tau =0}} + \||\nabla|^{\epsilon} A^{cf}\|_{L^{\infty}_t L^2_x}) \|\nabla \phi\|_{X^{0,\frac{1}{2}+\epsilon-}_{|\tau|=|\xi|}} \, , $$
where we split $A^{cf}$ into its low and high frequency parts $A^{cf}_l$ and $A^{cf}_h$. The low frequency part is easily estimated as follows:
$$ \|A^{cf,l}\nabla \phi\|_{L^2_{xt}} \lesssim \|A^{cf,l} \|_{L^{\infty}_t L^{\infty}_x} \|\nabla \phi\|_{L^2_t L^2_x} \lesssim \||\nabla|^{\epsilon}A^{cf,l}\|_{L^{\infty}_t L^2_x} \|\nabla \phi\|_{X^{0,\frac{1}{2}+\epsilon-}_{|\tau|=|\xi|}} \, . $$
For the high frequency part we want to show:
$$\|A^{cf,h} \nabla \phi \|_{L^2_{xt}} \lesssim \|A^{cf,h}\|_{X^{\frac{1}{2}+,\frac{1}{2}+}_{\tau=0}} \|\nabla \phi \|_{X^{0,\frac{1}{2}+}_{|\tau|=|\xi|}} \, . $$
This estimate would follow if we prove
$$
\int_* m(\xi_1,\xi_2,\xi_3,\tau_1,\tau_2,\tau_3) \widehat{u}_1(\xi_1,\tau_1)  \widehat{u}_2(\xi_2,\tau_2) \widehat{u}_3(\xi_3,\tau_3) d\xi d\tau \lesssim \prod_{i=1}^3 \|u_i\|_{L^2_{xt}} \, , 
$$
where 
$$ m = \frac{1}{ \langle |\tau_2| - |\xi_2|\rangle^{\frac{1}{2}+}  \langle \xi_3 \rangle^{\frac{1}{2}+}\langle \tau_3 \rangle^{\frac{1}{2}+}} \, .$$
The following argument is closely related to the proof of a similar estimate in  \cite{T1}.\\
By two applications of the averaging principle (\cite{T}, Prop. 5.1) we may replace $m$ by
$$ m' = \frac{ \chi_{||\tau_2|-|\xi_2||\sim 1} \chi_{|\tau_3| \sim 1}}{ \langle \xi_3 \rangle^{\frac{1}{2}+}} \, . $$
Let now $\tau_2$ be restricted to the region $\tau_2 =T + O(1)$ for some integer $T$. Then $\tau_1$ is restricted to $\tau_1 = -T + O(1)$, because $\tau_1 + \tau_2 + \tau_3 =0$, and $\xi_2$ is restricted to $|\xi_2| = |T| + O(1)$. The $\tau_1$-regions are essentially disjoint for $T \in {\mathbb Z}$ and similarly the $\tau_2$-regions. Thus by Schur's test (\cite{T}, Lemma 3.11) we only have to show
\begin{align*}
 &\sup_{T \in {\mathbb Z}} \int_* \frac{ \chi_{\tau_1=-T+O(1)} \chi_{\tau_2=T+O(1)} \chi_{|\tau_3|\sim 1} \chi_{|\xi_2|=|T|+O(1)}}{\langle \xi_3 \rangle^{\frac{1}{2}+}}\cdot \\
 & \hspace{14em} \cdot\widehat{u}_1(\xi_1,\tau_1) \widehat{u}_2(\xi_2,\tau_2)
\widehat{u}_3(\xi_3,\tau_3) d\xi d\tau \lesssim \prod_{i=1}^3 \|u_i\|_{L^2_{xt}} \, . 
\end{align*}
The $\tau$-behaviour of the integral is now trivial, thus we reduce to
\begin{equation}
\label{50}
\sup_{T \in {\mathbb N}} \int_{\sum_{i=1}^3 \xi_i =0}  \frac{ \chi_{|\xi_2|=|T|+O(1)}}{ \langle \xi_3 \rangle^{\frac{1}{2}+}} \widehat{f}_1(\xi_1)\widehat{f}_2(\xi_2)\widehat{f}_2(\xi_3)d\xi \lesssim \prod_{i=1}^3 \|f_i\|_{L^2_x} \, .
\end{equation}
It only remains to consider the following two cases: \\
Case 1: $|\xi_1| \sim |\xi_3| \gtrsim T$. We obtain in this case
\begin{align*}
L.H.S. \, of \, (\ref{50}) 
&\lesssim \sup_{T \in{\mathbb N}} \frac{1}{T^{\frac{1}{2}+}} \|f_1\|_{L^2} \|f_3\|_{L^2} \| {\mathcal F}^{-1}(\chi_{|\xi|=T+O(1)} \widehat{f}_2)\|_{L^{\infty}({\mathbb R}^2)} \\
&\lesssim \sup_{T \in{\mathbb N}} \frac{1}{ T^{\frac{1}{2}+}} 
\|f_1\|_{L^2} \|f_3\|_{L^2} \| \chi_{|\xi|=T+O(1)} \widehat{f}_2\|_{L^1({\mathbb R}^2)} \\
&\lesssim \hspace{-0.1em}\sup_{T \in {\mathbb N}} \frac{T^{\frac{1}{2}}}{T^{\frac{1}{2}+}}  \prod_{i=1}^3 \|f_i\|_{L^2} \lesssim\hspace{-0.1em}
\prod_{i=1}^3 \|f_i\|_{L^2}.
\end{align*}
Case 2: $|\xi_1| \sim T \gtrsim |\xi_3|$. 
An elementary calculation shows that
\begin{align*}
L.H.S. \, of \,  (\ref{50})
\lesssim \sup_{T \in{\mathbb N}} \| \chi_{|\xi|=T+O(1)} \ast \langle \xi \rangle^{-1-}\|^{\frac{1}{2}}_{L^{\infty}(\mathbb{R}^2)} \prod_{i=1}^3 \|f_i\|_{L^2_x} \lesssim \prod_{i=1}^3 \|f_i\|_{L^2_x}\, ,
\end{align*}
so that the desired estimate follows.\\
{\bf Claim 7:}
$$ \|\nabla A^{cf} \phi\|_{X^{0,-\frac{1}{2}+\epsilon-}_{|\tau|=|\xi|}} \lesssim \| \nabla A^{cf}\|_{X^{-\frac{1}{2}+\epsilon,\frac{1}{2}+}_{\tau =0}} \|\phi\|_{X^{1,\frac{1}{2}+\epsilon--}_{|\tau|=|\xi|}} \, . $$
By duality this is equivalent to
$$ \| w \phi \|_{X^{\frac{1}{2}-\epsilon,-\frac{1}{2}-}_{\tau =0}} \lesssim \|w\|_{X^{0,\frac{1}{2}-\epsilon+}_{|\tau|=|\xi|}} \|\phi\|_{X^{1,\frac{1}{2}+\epsilon--}_{|\tau|=|\xi|}} \, . $$
We use the estimate $ \frac{\langle \xi \rangle}{\langle \tau \rangle}  \lesssim \langle |\xi| - |\tau| \rangle$ and obtain
$$ \| w \phi \|_{X^{\frac{1}{2}-\epsilon,-\frac{1}{2}-}_{\tau =0}} \lesssim \|w \phi\|_{X^{0,\frac{1}{2}-\epsilon}_{|\tau|=|\xi|}} \lesssim \|w\|_{X^{0,\frac{1}{2}-\epsilon+}_{|\tau|=|\xi|}} \|\phi\|_{X^{1,\frac{1}{2}+\epsilon--}_{|\tau|=|\xi|}} \, , $$
where the last estimate follows from Theorem \ref{Theorem3} with $s_0 = 0$ , $b_0 = -\frac{1}{2}+\epsilon$ , $s_1 = 0$ , $s_2=1 $ , $b_1=\frac{1}{2}-\epsilon+$ , $b_2=\frac{1}{2}+\epsilon--$ . \\
{\bf Claim 8:} 
$$\|A^{cf} A^{cf} \phi \|_{L^2_{xt}} \lesssim (\|\nabla A^{cf}\|^2_{X^{-\frac{1}{2}+\epsilon,\frac{1}{2}+}_{\tau =0}} + \||\nabla|^{\epsilon}A^{cf}\|^2_{L^{\infty}_t L^2_x}) \|\phi\|_{X^{1,\frac{1}{2}+}_{|\tau|=|\xi|}} \, . $$
Splitting $A^{cf}= A^{cf,h} + A^{cf,l}$ we first consider 
\begin{align*}
\|A^{cf,h} A^{cf,h} \phi\|_{L^2_{xt}} 
\lesssim  \| A^{cf,h}\|_{L^4_t L^{4+}_x}^2  \phi\|_{L^{\infty}_t L^{\infty-}_x}  \lesssim \|A^{cf,h}\|_{X^{\frac{1}{2}+\epsilon,\frac{1}{4}}_{\tau =0}}^2  \|\phi\|_{X^{1,\frac{1}{2}+}_{|\tau|=|\xi|}} \, .
\end{align*}
Next we consider
$$
\|A^{cf,l} A^{cf,l} \phi\|_{L^2_{xt}}  \lesssim \|A^{cf,l}\|_{L^{\infty}_t L^{\infty}_x}^2 \|\phi\|_{L^2_t L^2_x} 
\lesssim \||\nabla|^{\epsilon}A^{cf}\|_{L^{\infty}_t L^2_x}^2  \|\phi\|_{X^{1,\frac{1}{2}+}_{|\tau|=|\xi|}}  $$
and also
\begin{align*}
\|A^{cf,l} A^{cf,h} \phi\|_{L^2_{xt}} & \lesssim \|A^{cf,l}\|_{L^{\infty}_t L^{\infty}_x} \|A^{cf,h}\|_{L^4_t L^{2+}_x} \|\phi\|_{L^4_t L^{\infty-}_x} \\ &\lesssim \||\nabla|^{\epsilon}A^{cf,l}\|_{L^{\infty}_t L^2_x} \| A^{cf,h} \|_{X^{\frac{1}{2}+,\frac{1}{4}}_{\tau|=0}} \|\phi\|_{X^{1,\frac{1}{2}+}_{|\tau|=|\xi|}} \, ,
\end{align*}
which completes the proof of claim 8.

If one combines similar estimates with (\ref{23}) and (\ref{24}) we also obtain the required bounds for $\|A^{df} A^{df} \phi\|_{L^2_{xt}}$ and $\|A^{df} A^{cf} \phi\|_{L^2_{xt}}$.\\
{\bf Claim 9:} For a suitable $N \in {\mathbb N}$ the following estimate holds:
$$\|\phi V'(|\phi|^2)\|_{L^2_{xt}} \lesssim \|\phi\|_{X^{1,\frac{1}{2}+}_{|\tau|=|\xi|}} (1 + \|\phi\|_{X^{1,\frac{1}{2}+}_{|\tau|=|\xi|}}^N) \, .$$
Using the polynomial bound of  $V'$ we obtain:
$$ \|\phi V'(|\phi|^2)\|_{L^2_{xt}} \lesssim \|\phi\|_{L^2_{xt}} +\|\phi\|^{N+1}_{L^{2(N+1)}_{xt}} \lesssim \|\phi\|_{X^{1,\frac{1}{2}+}_{|\tau|=|\xi|}} (1 + \|\phi\|_{X^{1,\frac{1}{2}+}_{|\tau|=|\xi|}}^N) \, .$$

Now the contraction mapping principle applies. The claimed properties of $A^{df}$ follow immediately from (\ref{23}), (\ref{24}) and (\ref{25}). The proof of Theorem \ref{Theorem1.1} is complete.
\end{proof}

\section{Proof of Theorem \ref{Theorem1.2}}
\begin{proof}
We follow the arguments of Selberg-Tesfahun \cite{ST} in the case of the Lorenz gauge. Define
$$ I(t) = \|\phi(t)\|_{L^2} + \sum_{j=1}^2 \|D_j \phi(t)\|_{L^2} + \|\partial_t \phi(t)\|_{L^2} \, . $$
The first step is to show that the local existence time in Theorem \ref{Theorem1.1} in fact only depends on $I(0)$.

Even in the temporal gauge $A_0=0$ there is some freedom left for the choice of the gauge. We apply a gauge transformation with
$$ \chi(x) = (-\Delta)^{-1} div \, A(0,x) = (-\Delta)^{-1} div \,a(x) \, . $$
In the new gauge we obtain $A_0' = A_0 + \partial_t \chi = A_0 = 0$ and
$$ (A')^{cf}(0)= A^{cf}(0) + (\nabla \chi)^{cf}(0) = -(-\Delta)^{-1} \nabla div \, A(0) + (-\Delta)^{-1} \nabla div \, A(0) = 0 \, . $$
By this transformation the regularity of the data and of a solution is preserved, as we now show. The same holds for its inverse obtained by replacing $\chi$ by $-\chi$. We namely have $\|\phi'\|_{L^2} = \|\phi\|_{L^2}$ and $\|\partial_t \phi'\|_{L^2} = \|\partial_t \phi \|_{L^2}$ as well as
\begin{align*}
\|\partial_j \phi'\|_{L^2} & = \|\partial_j(e^{i \chi} \phi)\|_{L^2} \le \|(\partial_j \chi)e^{i\chi} \phi\|_{L^2} + \|e^{i\chi} \partial_j \phi\|_{L^2} \\
& \lesssim \|A(0)\|_{L^{2+\epsilon}} \|\phi\|_{L^{\frac{4+2\epsilon}{\epsilon}}} + \|\phi\|_{H^1} \lesssim (\||\nabla|^{\epsilon} a\|_{L^2} +1) \|\phi\|_{H^1} < \infty \, ,\\
\||\nabla|^{\epsilon} A'\|_{H^{\frac{1}{2}}} & \lesssim \||\nabla|^{\epsilon} A\|_{H^{\frac{1}{2}}} + \||\nabla|^{\epsilon} \nabla \chi \|_{H^{\frac{1}{2}}} \lesssim \||\nabla|^{\epsilon} A\|_{H^{\frac{1}{2}}} +\||\nabla|^{\epsilon} a\|_{H^{\frac{1}{2}}} < \infty \, .\\
\end{align*}
Moreover the compatability condition is obviously preserved. An elementary computation also shows that $I(t)$ as well as $E(t)$ is preserved, because
\begin{align*}
&\|D_{\mu}' \phi'\|_{L^2} = \|(\partial_{\mu} - i(A_{\mu} + \partial_{\mu} \chi))(e^{i\chi} \phi)\|_{L^2} \\
&=\|ie^{i\chi} \partial_{\mu} \chi \phi + e^{i\chi} \partial_{\mu} \phi - iA_{\mu} e^{i\chi} \phi - i \partial_{\mu} \chi e^{i\chi} \phi \|_{L^2} 
 = \| D_{\mu} \phi\|_{L^2} \, .
\end{align*}

We apply Theorem \ref{Theorem1.1} to the transformed problem and obtain a solution on $[0,T]$, where $T$ depends only on $\|\phi'(0)\|_{H^1} + \|\partial_t \phi'(0)\|_{L^2}$, where we used that $(A')^{cf}(0)=0$. We now show that this quantity is controlled by $I(0)$. Trivially we have
$\|\phi'(0)\|_{L^2} = \|\phi(0)\|_{L^2}$ and $\|(\partial_t \phi')(0)\|_{L^2} = \|(\partial_t \phi)(0)\|_{L^2}$. Furthermore using $(A')^{cf}(0) =0$ we obtain
\begin{equation}
\label{51}
 \|\partial_j \phi'(0)\|_{L^2} \le \|D_j' \phi'(0)\|_{L^2} + \|A_j'(0) \phi'(0)\|_{L^2} = \|D_j' \phi'(0)\|_{L^2} + \|(A_j')^{df}(0) \phi'(0)\|_{L^2} \, . \end{equation}
But now we obtain by (\ref{***2})
\begin{align}
\label{52}
(A_1')^{df}(0) &= -2 \Delta^{-1} \partial_2 Im(\overline{\phi'(0)} (\partial_t \phi'(0)) \\
\nonumber
&= -2 \Delta^{-1} \partial_2 Im(e^{-i\chi} \overline{\phi(0)} e^{i\chi} (\partial_t \phi(0)) = A_1^{df}(0)
\end{align}
and similarly $(A_2')^{df}(0) = A_2^{df}(0)$. By the covariant Sobolev inequality (cf. \cite{GV})
$$ \|\phi'(0)\|_{L^4} \lesssim \|\phi'(0)\|_{L^2}^{\frac{1}{2}} (\sum_{j=1}^2 \|D_j' \phi'(0)\|_{L^2})^{\frac{1}{2}} = \|\phi(0)\|_{L^2}^{\frac{1}{2}} (\sum_{j=1}^2 \|D_j \phi(0)\|_{L^2})^{\frac{1}{2}} \lesssim I(0) $$ and similarly $\|\phi(0)\|_{L^4} \lesssim I(0)$,
and thus by (\ref{51}) and (\ref{52})
\begin{align*}
\|\partial_j \phi'(0)\|_{L^2} & \lesssim \|D_j \phi(0)\|_{L^2} + \|A^{df}(0)\|_{L^4} \|\phi'(0)\|_{L^4} \\
& \lesssim I(0) + \| \nabla^{-1}(\phi(0) (\partial_t \phi)(0))\|_{L^4} I(0) \\
& \lesssim I(0) (1+ \|\phi(0)\|_{L^4} \|\partial_t \phi(0)\|_{L^2} ) \\
& \lesssim I(0) (1+\|I(0)\|_{L^2}^2) \, .
\end{align*}
We conclude that $T$ only depends on $I(0)$. Finally we reverse the gauge transform to obtain the solution $(\phi(t),A(t))$ on $[0,T]$.

What we need to obtain a global solution is an a priori bound of $I(t)$ on every finite time interval. Of course we use energy conservation $E(t)=E(0)$. Under our sign assumption $V(r) \ge - \alpha^2 r$ $\forall\, r \ge 0$ we obtain
\begin{equation}
\label{2.11}
\sum_{\mu=0}^2 \|D_{\mu} \phi(t)\|_{L^2}^2 = E(t) - \int V(|\phi|^2) dx \le |E(0)| + \alpha^2 \|\phi(t)\|_{L^2}^2 \, . 
\end{equation}
This implies
\begin{align*}
\frac{d}{dt}\big(\|\phi(t)\|_{L^2}^2\big) & = \int 2 Re(\overline{\phi(t)} (D_0 \phi)(t)) dx \\
& \le 2 \|\phi(t)\|_{L^2} \|(D_0 \phi)(t)\|_{L^2}  \\
& \le 2 \|\phi(t)\|_{L^2} (|E(0)| + \alpha^2 \|\phi(t)\|_{L^2}^2)^{\frac{1}{2}} \\
& \le \alpha^{-1} |E(0)| + 2 \alpha \|\phi(t)\|_{L^2}^2 \, ,
\end{align*}
hence by Gronwall's lemma
\begin{equation}
\label{2.12}
\| \phi(t) \|_{L^2}^2 \le e^{2\alpha |t|} (\|\phi(0)\|_{L^2}^2 + |t| \alpha^{-1} |E(0)|) \, .
\end{equation}
By (\ref{2.11}) and (\ref{2.12}) we obtain the desired a priori control of $I(t)$, so that Theorem \ref{Theorem1.2} is proved.
\end{proof}

\end{document}